\newtheorem{theorem}{Theorem}[section]
\theoremstyle{definition}
\newtheorem{definition}[theorem]{Definition}
\newtheorem{proposition}[theorem]{Proposition}
\newcommand{\cosg}{\mathrm{cosg}}
\newcommand{\sing}{\mathrm{sing}}
\newcommand{\vecpe}{\times_{e}}
\theoremstyle{remark}
\newtheorem{remark}[theorem]{Remark}
\numberwithin{equation}{section}
\begin{document}

\title[RM frames and spherical curves in isotropic spaces]{Rotation minimizing frames and spherical curves in simply isotropic and pseudo-isotropic 3-spaces}
% \title[short text for running head]{full title}

%    Only \author and \address are required; other information is
%    optional.  Remove any unused author tags.

%    author one information
\author[Luiz C. B. da Silva]{Luiz C. B. da Silva}
\address{Departamento de Matem\'atica\\
Universidade Federal de Pernambuco\\
50670-901, Recife, Pernambuco, Brazil}
\curraddr{Department of Physics of Complex Systems, Weizmann Institute of Science, Rehovot 7610001, Israel}
\email{luiz.da-silva@weizmann.ac.il}
%\thanks{}

%    author two information
%\author{}
%\address{}
%\curraddr{}
%\email{}
\thanks{This work was financially supported by Conselho Nacional de Desenvolvimento Cient\'ifico e Tecnol\'ogico - CNPq (Brazilian agency).
}

%    \subjclass is required.
\subjclass[2010]{51N25, 53A20, 53A35, 53A55, 53B30}
\keywords{Non-Euclidean geometry, Cayley-Klein geometry, isotropic space, pseudo-isotropic space, spherical curve, plane curve}

\date{\today}

%\dedicatory{}

%    Abstract is required.
\begin{abstract}
In this work, we are interested in the differential geometry of curves in the simply isotropic and pseudo-isotropic 3-spaces, which are examples of Cayley-Klein geometries whose absolute figure is given by a plane at infinity and a degenerate quadric. Motivated by the success of rotation minimizing (RM) frames in Euclidean and Lorentzian geometries, here we show how to build RM frames in isotropic geometries and apply them in the study of isotropic spherical curves. Indeed, through a convenient manipulation of osculating spheres described in terms of RM frames, we show that it is possible to characterize spherical curves via a linear equation involving the curvatures that dictate the RM frame motion. For the case of pseudo-isotropic space, we also discuss on the distinct choices for the absolute figure in the framework of a Cayley-Klein geometry and prove that they are all equivalent approaches through the use of Lorentz numbers (a complex-like system where the square of the imaginary unit is $+1$). Finally, we also show the possibility of obtaining an isotropic RM frame by rotation of the Frenet frame through the use of Galilean trigonometric functions and dual numbers (a complex-like system where the square of the imaginary unit vanishes).
\end{abstract}

\maketitle

\section{Introduction}

The three dimensional ($3d$) simply isotropic $\mathbb{I}^3$ and pseudo-isotropic $\mathbb{I}_{\mathrm{p}}^3$ spaces are examples of $3d$ Cayley-Klein (CK) geometries \cite{Giering1982,Sulanke2006,Sachs1987,StruveJG2010}, which is basically the study of those properties in projective space $\mathbb{P}^3$ that preserves a certain configuration, the so-called \emph{absolute figure}. Indeed, following Klein ``Erlanger Program'' \cite{birkhoff1988felix,klein1893vergleichende}, a CK geometry is the study of the geometry invariant by the action of the subgroup of projectivities that fix the absolute figure: e.g., Euclidean (Minkowski) space $\mathbb{E}^3$ ($\mathbb{E}_1^3$) can be modeled through an absolute figure given in homogeneous coordinates by a plane at infinity, usually identified with $x_0=0$, and a non-degenerate quadric of index zero (index one) usually identified with $x_0^2+\cdots+x_3^2=0$ ($x_0^2+x_1^2+x_2^2-x_3^2=0$, respectively) \cite{Giering1982}. In our case, i.e., isotropic geometries, the absolute figure is given by a plane at infinity and a degenerate quadric of index 0 or 1: $x_0^2+x_1^2+\delta\,x_2^2=0$, with $\delta=1$ for the simply isotropic figure and $\delta=-1$ for the pseudo-isotropic one.

Recently, isotropic geometry has been seen a renewed interest from both pure and applied viewpoints (a quite comprehensive and historical account before the 1990's can be found in \cite{Sachs1990}). We may mention investigations of special classes of curves \cite{YoonJG2017} and surfaces \cite{AydinJG2015,AydinArXiv2016,KaracanTJM2017,SipusPMH2014}, while applications may range from  economics \cite{AydinTJM2016,chenKJM2014} and elasticity \cite{pottmann2009laguerre} to image processing and shape interrogation \cite{koenderink2002image,PottmannCAGD1994}. Another stimulus may come from the problem of characterizing curves on level set surfaces $\Sigma=F^{-1}(c)$. In fact,  by introducing a metric induced by $\mbox{Hess}\,F$ \cite{daSilvaJG2017} one may be led to the study of an isotropic geometry, since the Hessian may fail to be non-degenerate: e.g., for  $\mbox{Hess}\,F=\mbox{diag}(1,\delta,0)$, the metric $\langle \mbox{Hess}\,F\,\cdot,\cdot\rangle$ leads to the geometry of simply isotropic space  if $\delta=1$, pseudo-isotropic space  if $\delta=-1$, and doubly isotropic space  if $\delta=0$ (see \cite{Sachs1990,SipusGM1998,StrubeckerSOA1941,VoglerGMB1989}, \cite{AydinArXiv2016}, and \cite{BraunerCrelle1967} for an account of these geometries, respectively).

Motived by the success of Rotation Minimizing (RM) frames in the study of spherical curves, here we develop the fundamentals of RM frames in isotropic spaces, which in combination with an adequate manipulation of osculating spheres allow us to prove that spherical curves can be characterized through a linear equation involving the coefficients that dictate the frame motion, as also happens in Euclidean $\mathbb{E}^3$ \cite{BishopMonthly}, Lorentzian $\mathbb{E}_1^3$ \cite{daSilvaJG2017,OzdemirMJMS2008}, and in Riemannian spaces \cite{daSilvaMJM2018,Etayo2016}(\footnote{The characterization of isotropic spherical curves via a Frenet frame is made through a differential equation involving curvature and torsion \cite{StrubeckerSOA1941}, see also Eq. (7.36) in \cite{Sachs1990}, p. 128.}). In addition, for the case of pseudo-isotropic space we discuss the construction of spheres, moving frames along curves, and pseudo-isotropic spherical indicatrix. We also discuss on the distinct approaches to the study of pseudo-isotropic space as a CK geometry, in which we are able to prove that the available choices are all equivalent with the help of the so-called Lorentz numbers \cite{BirmanMonthly1984,Yaglom1979}. Finally, we also show how to relate RM and Frenet frames via isotropic rotations, in both $\mathbb{I}^3$ and $\mathbb{I}_{\mathrm{p}}^3$, by using  Galilean trigonometric functions \cite{Yaglom1979} and dual numbers \cite{Sachs1987,Yaglom1979}.

The remaining of this work is divided as follows. In section 2, we review the concept of RM frames and spherical curves in Euclidean space. In section 3, we introduce some terminology related to simply isotropic space and, in section 4, we discuss how to introduce moving frames along simply isotropic curves. In section 5, we then study simply isotropic spheres and the characterization of spherical curves. In section 6, we turn our attention to the pseudo-isotropic space. In section 7 and 8, we study pseudo-isotropic spheres and moving frames along pseudo-isotropic curves, respectively. In section 9, we  characterize pseudo-isotropic spherical curves. Finally, the Appendix contains a short account of the rings of dual and Lorentz numbers. 

\begin{remark}
Despite the risk of making this paper longer than what would be strictly necessary, here we will try to be as self-contained as possible, since some of the most comprehensive and elementary references in isotropic geometry, such as \cite{Giering1982,Sachs1987,Sachs1990,StrubeckerCrelle1938,StrubeckerSOA1941}, are not available in English. We hope this will make the concepts from isotropic geometry more accessible to a broader audience.
\end{remark}

\section{Preliminaries: rotation minimizing frames and spherical curves in Euclidean space}

Let $\mathbb{E}^3$ be the $3d$ Euclidean space, i.e., $\mathbb{R}^3$ equipped with the standard Euclidean metric $\langle\cdot,\cdot\rangle$. The usual way to introduce a moving frame along curves is by means of the Frenet frame \cite{Kreyszig1991,Kuhnel2010}. However, there are other possibilities as well. Indeed, by introducing the notion of a rotation minimizing vector field, Bishop considered an orthonormal adapted moving frame $\{\mathbf{t},\mathbf{n}_1,\mathbf{n}_2\}$, where $\mathbf{t}$ is the unit tangent, whose equations of motion are \cite{BishopMonthly} 
\begin{equation}
\mathbf{t}'(s)=\kappa_1(s)\,\mathbf{n}_1(s)+\kappa_2(s)\,\mathbf{n}_2(s)\mbox{ and }\mathbf{n}_i'(s)=-\kappa_i(s)\,\mathbf{t}(s).\label{eq::BishopEqs}
\end{equation}

The basic idea here is that $\mathbf{n}_i$ rotates
only the necessary amount to remain normal to the tangent (then justifying the terminology). In addition,  $\kappa_1$ and $\kappa_2$ relate with the curvature $\kappa$ and torsion $\tau$ according to \cite{BishopMonthly}
\begin{equation}
\left\{
\begin{array}{c}
\kappa_1 = \kappa\cos\theta\\
\kappa_2 = \kappa\sin\theta\\
\end{array}
\right.\mbox{ and }\,\theta'= \tau.
\end{equation}

Notice that RM  frames are not uniquely defined, any rotation of $\mathbf{n}_i$ on the normal plane still gives a new RM vector field, i.e., there is an ambiguity associated with the group $SO(2)$ acting on the normal planes. So, an RM frame is defined up to an additive constant\footnote{Despite of this, the prescription of $\kappa_1,\kappa_2$ still determines a curve up to rigid motions and, in addition, RM frames can be globally defined even if the curvature $\kappa$ has a zero \cite{BishopMonthly}.}. Finally, of great interest to us, is the
\begin{theorem}[\cite{BishopMonthly,daSilvaJG2017}]
A regular curve in Euclidean or Lorentz-Minkowski spaces lies on a sphere if and only if its normal development, i.e., the curve $(\kappa_1(s),\kappa_2(s))$, lies on a line not passing through the origin. %: the distance of this line from the origin, $d$, and the radius of the sphere, $r$, are reciprocals, i.e., $r=d^{-1}$.
In addition, straight lines passing through the origin characterize plane curves which are not spherical.
\label{theo:BishopCharacSpherericalCurves} 
\end{theorem}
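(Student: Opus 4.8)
The plan is to work with the position vector of the curve relative to a candidate center and to read the sphere condition directly off the RM structure equations \eqref{eq::BishopEqs}. Let $\alpha$ be parametrized by arc length and suppose first that it lies on a sphere of center $\mathbf{m}$ and radius $r$; write $\gamma(s)=\alpha(s)-\mathbf{m}$ and expand it in the RM frame as $\gamma=a\,\mathbf{t}+b_1\,\mathbf{n}_1+b_2\,\mathbf{n}_2$, where $a=\langle\gamma,\mathbf{t}\rangle$ and $b_i=\langle\gamma,\mathbf{n}_i\rangle$. Since $\mathbf{m}$ is constant we have $\gamma'=\mathbf{t}$, and differentiating the coefficients with the help of \eqref{eq::BishopEqs} yields the linear system
\[
a'=1+\kappa_1 b_1+\kappa_2 b_2,\qquad b_1'=-\kappa_1\,a,\qquad b_2'=-\kappa_2\,a.
\]

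For the forward implication I would note that $\langle\gamma,\gamma\rangle=r^2$ is constant, so $0=\tfrac{1}{2}(\langle\gamma,\gamma\rangle)'=\langle\gamma,\mathbf{t}\rangle=a$; hence $a\equiv 0$. Feeding this back into the system forces $b_1,b_2$ to be constant, while $a'=0$ gives $\kappa_1 b_1+\kappa_2 b_2=-1$. Thus $(\kappa_1(s),\kappa_2(s))$ satisfies the fixed affine relation $b_1 x+b_2 y=-1$ with nonzero right-hand side, i.e., the normal development lies on a line not through the origin. For the converse I would start from a relation $c_1\kappa_1+c_2\kappa_2=1$ with constants $c_1,c_2$, define $\mathbf{m}=\alpha+c_1\,\mathbf{n}_1+c_2\,\mathbf{n}_2$, and compute $\mathbf{m}'=(1-c_1\kappa_1-c_2\kappa_2)\,\mathbf{t}=0$, so that $\mathbf{m}$ is a fixed point; then $\alpha-\mathbf{m}=-c_1\,\mathbf{n}_1-c_2\,\mathbf{n}_2$ has constant length $\langle\alpha-\mathbf{m},\alpha-\mathbf{m}\rangle=c_1^2+c_2^2$, exhibiting $\alpha$ as a spherical curve.

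For the second assertion, a line through the origin means $(\kappa_1,\kappa_2)$ keeps a fixed direction, say $\kappa_1=\lambda\cos\phi$ and $\kappa_2=\lambda\sin\phi$ with $\phi$ constant. I would then verify that $\mathbf{B}=-\sin\phi\,\mathbf{n}_1+\cos\phi\,\mathbf{n}_2$ is a constant vector, since $\mathbf{B}'=(\kappa_1\sin\phi-\kappa_2\cos\phi)\,\mathbf{t}=0$; because $\langle\alpha',\mathbf{B}\rangle=\langle\mathbf{t},\mathbf{B}\rangle=0$, the function $\langle\alpha,\mathbf{B}\rangle$ is constant and $\alpha$ lies in a plane orthogonal to $\mathbf{B}$. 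That such a curve is not spherical (unless it degenerates) follows because a simultaneously planar and spherical curve lies on a circle, for which $\lambda$ is constant and the normal development collapses to a single point rather than spanning a genuine line.

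The routine part is the differentiation; the main obstacle is the bookkeeping in the Lorentz--Minkowski setting, where the metric is indefinite and one must insert the causal signs $\varepsilon_t=\langle\mathbf{t},\mathbf{t}\rangle$ and $\varepsilon_i=\langle\mathbf{n}_i,\mathbf{n}_i\rangle$ into the structure equations and into the expansion of $\gamma$. I expect the argument to carry over once these signs are tracked, because the decisive step---that the sphere condition is equivalent to $\langle\gamma,\mathbf{t}\rangle=0$ and hence to the constancy of the normal components---does not depend on the signature; the only change is that the constant squared radius becomes $\varepsilon_1 c_1^2+\varepsilon_2 c_2^2$, consistent with the various causal types of Lorentzian pseudospheres. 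This is precisely the template I will later adapt, with the appropriate notion of osculating sphere, to the isotropic ambient spaces.
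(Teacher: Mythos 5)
Your argument is correct, but it is not the paper's argument: it is essentially Bishop's original proof (the one the theorem's citation points to), whereas the paper deliberately re-proves the result via osculating spheres. You expand the fixed center, $\gamma=\alpha-\mathbf{m}=a\,\mathbf{t}+b_1\mathbf{n}_1+b_2\mathbf{n}_2$, derive the closed system $a'=1+\kappa_1b_1+\kappa_2b_2$, $b_i'=-\kappa_i a$ from Eq. (\ref{eq::BishopEqs}), and read the line condition off $a\equiv0$; this buys a $C^2$ hypothesis and no restriction on $\tau$. The paper instead imposes order-$3$ contact of a sphere along the curve, solves for the osculating center's RM coordinates $\beta_1=\kappa_2'/(\tau\kappa^2)$, $\beta_2=-\kappa_1'/(\tau\kappa^2)$, and characterizes sphericity by $P_S'=0$, i.e., constancy of $\beta_1,\beta_2$ --- a route that costs $C^4$ regularity and $\tau\neq0$, as the remark immediately following the paper's proof concedes. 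This difference matters for your closing paragraph, which contains a genuine misconception: your template does \emph{not} carry over to the isotropic ambient spaces, because there the metric $\langle\cdot,\cdot\rangle_z$ is degenerate --- the component of $\alpha-\mathbf{m}$ along the isotropic direction $\mathbf{b}$ is invisible to the metric, and spheres of parabolic type are not metric spheres at all --- which is precisely why the paper adopts the contact/osculating-sphere formulation (supplemented by the auxiliary Euclidean or Lorentzian product and the bivector frames) in its isotropic sections. Two smaller points: in $\mathbb{E}_1^3$ your converse can yield $\varepsilon_1c_1^2+\varepsilon_2c_2^2=0$ with $(c_1,c_2)\neq(0,0)$, in which case $\alpha$ lies on a lightcone rather than a pseudosphere, a degenerate case your sign bookkeeping should flag explicitly (it is treated in the cited Lorentzian reference); and for the addendum you prove only ``line through the origin $\Rightarrow$ planar,'' leaving the converse (a plane non-spherical curve has development on a line through the origin, e.g., since $\tau=0$ forces $\theta$ constant) implicit --- though, to be fair, the paper's own displayed proof omits the addendum altogether.
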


Here we furnish a proof of the above result by using osculating spheres, whose parametrization using  RM frames may be written as
\begin{equation}
P_S(s_0)=\alpha(s_0)+\beta_0\mathbf{t}(s_0)+\beta_1 \mathbf{n}_1(s_0)+\beta_2\mathbf{n}_2(s_0).
\end{equation}
Now, defining  $g(s)=\langle P_S-\alpha(s),P_S-\alpha(s)\rangle-r^2$, we have
\begin{eqnarray}
g ' & = & -2\langle P_S-\alpha,\mathbf{t}\rangle = -2\beta_0\,,\\
g'' & = & 2\langle\mathbf{t},\mathbf{t}\rangle-2\langle P_S-\alpha,\kappa_1\mathbf{n}_1+\kappa_2\mathbf{n}_2\rangle = -2(-1+\kappa_1\beta_1+\kappa_2\beta_2)\,,\\
g''' & = & -2\langle P_S-\alpha,\sum_i(\kappa_i'\mathbf{n}_i-\kappa_i^2\mathbf{t})\,\rangle = -2\sum_i(\kappa_i'\beta_i-\kappa_i^2\beta_0)\,.
\end{eqnarray}
Imposing an order 3 contact leads to $g'(s_0)=g''(s_0)=g'''(s_0)=0$ and then
\begin{equation}
\beta_0 = 0,\,\kappa_1(s_0)\beta
_1+\kappa_2(s_0)\beta_2-1=0,\,\mbox{ and }\kappa_1'(s_0)\beta_1+\kappa_2'(s_0)\beta_2=0.\label{eq::coefOfOscSphUsingRMF}
\end{equation}
Thus, the coefficients $\beta_0$, $\beta_1$, and $\beta_2$ as functions of $s_0$ are
\begin{equation}
\beta_0=0,\,\beta_1=\frac{\kappa_2'}{\kappa_1\kappa_2'-\kappa_1'\kappa_2}=\frac{\kappa_2'}{\tau\kappa^2},\,\mbox{ and }\beta_2=-\frac{\kappa_1'}{\kappa_1\kappa_2'-\kappa_1'\kappa_2}=-\frac{\kappa_1'}{\tau\kappa^2}\,,
\end{equation}
where in the equalities above we also used the relation between $(\kappa_1,\kappa_2)$ and $(\kappa,\tau)$.
\newline
\newline
\textit{Proof of Theorem \ref{theo:BishopCharacSpherericalCurves} for $C^4$ curves.} The derivative of the osculating center gives
\begin{equation}
P_S' = \frac{{\rm d}}{{\rm d}s}\left(\alpha+\frac{\kappa_2'}{\tau\kappa^2}\mathbf{n}_1-\frac{\kappa_1'}{\tau\kappa^2}\mathbf{n}_2\right)\nonumber\\
=\left(\frac{{\rm d}}{{\rm d}s}\frac{\kappa_2'}{\tau\kappa^2}\right)\mathbf{n}_1-\left(\frac{{\rm d}}{{\rm d}s}\frac{\kappa_1'}{\tau\kappa^2}\right)\mathbf{n}_2\,.
\end{equation}
From the linear independence of $\{\mathbf{n}_1,\mathbf{n}_2\}$ we conclude that $\alpha$ is spherical, i.e., $P'_S=0$, if and only if $\beta_1$ and $\beta_2$ are constants. From Eq. (\ref{eq::coefOfOscSphUsingRMF}), this is equivalent to say that the normal development lies on a line not passing through the origin.
\qed

\begin{remark}
The proof above has some weaknesses when compared with that of Bishop in  $\mathbb{E}^3$ \cite{BishopMonthly}. The use of osculating spheres demands that the curve must be $C^4$ and also that $\tau\not=0$, while in Bishop's approach one needs just a $C^2$ condition and no restriction on the torsion: $C^2$ is enough to have $\mathbf{t}$ and $\kappa_i$, while we need a $C^4$ to have $\kappa_i''$. However, this approach will prove to be very useful in the following due to the lack of good orthogonality properties in isotropic spaces.
\end{remark}

In the following, we shall extend this formalism in order to present a way of
building RM frames along curves in both simply isotropic $\mathbb{I}^3$ and pseudo-isotropic $\mathbb{I}_{\mathrm{p}}^3$ 3-spaces and then apply them to furnish a unified
approach to the characterization of isotropic spherical curves. In addition, by employing dual numbers and Galilean trigonometric functions, we will also show how to relate (i) a Frenet frame to an RM frame and (ii) an RM frame with another RM frame through isotropic rotations.

\section{Differential geometry in simply isotropic space}

In the spirit of Klein's Erlangen Program, simply isotropic geometry is the study of the properties invariant by the action of the 6-parameter group $\mathcal{B}_6$ \cite{Sachs1990}
\begin{equation}
\left\{
\begin{array}{ccc}
\bar{x} & = & a + x\,\cos\phi-y\,\sin\phi\\ 
\bar{y} & = & b + x\,\sin\phi+y\,\cos\phi\\
\bar{z} & = & c_0 + c_1x+c_2y+z\\
\end{array}
\right.,\,a,\,b,\,c_i,\,\phi\in\mathbb{R}.\label{eq::IsoGroupB6}
\end{equation}
In other words, $\mathcal{B}_6$ is our group of rigid motions. Notice that on the $xy$-plane this geometry looks exactly like the plane Euclidean geometry $\mathbb{E}^2$. The projection of a vector $\mathbf{u}=(u_1,u_2,u_3)$ on the $xy$-plane is the \emph{top view} of $\mathbf{u}$ and we shall denote it by $\tilde{\mathbf{u}}=(u_1,u_2,0)$. The top view concept plays a fundamental role in the simply isotropic space $\mathbb{I}^3$, since the $z$-direction is preserved by the action of $\mathcal{B}_6$. A line with this direction is called an \emph{isotropic line} and a plane that contains an isotropic line is said to be an \emph{isotropic plane}.

One may introduce a \emph{simply isotropic inner product} between two vectors $\mathbf{u},\,\mathbf{v}$ as
\begin{equation}
\langle\,(u_1,u_2,u_3),(v_1,v_2,v_3)\,\rangle_{z} = u_1v_1+u_2v_2\,,
\end{equation}
from which we define a \emph{simply isotropic distance} as\footnote{The index $z$ here emphasizes that $z$ is the isotropic (degenerate) direction. Note, in addition, that $\langle\cdot,\cdot\rangle_z$ induces a semi-distance in $\mathbb{R}^3$, since points in an isotropic line have zero distance.}: 
\begin{equation}
\mathrm{d}_z(A,B)=\sqrt{\langle B-A,B-A\rangle_z}.
\end{equation}

 The inner product and distance above are just the plane Euclidean counterparts of the top views $\tilde{\mathbf{u}}$ and $\tilde{\mathbf{v}}$. In addition, since the isotropic metric is degenerate, the distance from $(u_1,u_2,u_3)$ to $(u_1,u_2,v_3)$ is zero ($\tilde{\mathbf{u}}=\tilde{\mathbf{v}}$). In such cases, one may define a codistance by $\mathrm{cd}_z(A,B)=\vert b_3-a_3\vert$, which is then preserved by $\mathcal{B}_6$. (It would be interesting to mention that $\mathbb{I}^3$ is not isotropic from a ``physicist viewpoint", since the $z$-direction is preserved by rigid motions and then gives rise to a certain anisotropy. In any case, this is an established nomenclature and we keep it here.)

\section{Moving frames along curves in simply isotropic space}

A regular curve $\alpha:I\to \mathbb{I}^3$, i.e., $\alpha'\not=0$, is parameterized by arc-length $s$ when $\Vert\alpha'(s)\Vert_z\stackrel{\mathrm{def}}{=}\Vert\tilde{\alpha}'(s)\Vert=1$. In the following, we shall assume it for all curves (in particular, this excludes isotropic velocity vectors). In addition, a point $\alpha(s_0)$ in which $\{\alpha'(s_0),\alpha''(s_0)\}$ is linearly dependent is an \emph{inflection point} and a regular unit speed curve $\alpha(s)=(x(s),y(s),z(s))$  with no inflection point is called an \emph{admissible curve} if $x'y''-x''y'\not=0$ (this condition implies that the osculating planes, i.e., the planes that have a contact of order 2 with the reference curve\footnote{For a level set surface $\Sigma=G^{-1}(c)$, a contact of order $k$ with $\alpha$ at $\alpha(s_0)$ is equivalent to say that $\beta^{(i)}(s_0)=0$ ($1\leq i\leq k$), where $\beta=G\circ\alpha$ and $c=\beta(s_0)=\alpha(s_0)$ \cite{Kreyszig1991}.}, can not be isotropic. Moreover, the only curves with $x'y''-x''y'\equiv0$ are precisely the isotropic lines \cite{Sachs1990}).

\subsection{Simply isotropic Frenet frame}

The (isotropic) unit tangent $\mathbf{t}$, principal normal $\mathbf{n}$, and curvature function $\kappa$ are defined as usual
\begin{equation}
\mathbf{t}(s)=\alpha'(s),\,\mathbf{n}(s)=\frac{\mathbf{t}'(s)}{\kappa(s)},\,\,\mathrm{ and }\,\,\kappa(s)=\Vert\mathbf{t}'(s)\Vert_z=\Vert\tilde{\mathbf{t}}'(s)\Vert.
\end{equation}
As usually happens in isotropic geometry, the curvature $\kappa$ is just the plane curvature function of its top view $\tilde{\alpha}$: $\kappa(s)=(x'y''-x''y')(s)$. To complete the trihedron, we define the binormal as the (co)unit vector $\mathbf{b}=(0,0,1)$ in the isotropic direction. The frame  $\{\mathbf{t},\mathbf{n},\mathbf{b}\}$ is linearly independent, $\det(\mathbf{t},\mathbf{n},\mathbf{b})=\frac{1}{\kappa}(x'y''-x''y')=1$, and the Frenet equations corresponding to the isotropic Frenet frame are
\begin{equation}
\frac{\mathrm{d}}{\mathrm{d}s}\left(
\begin{array}{c}
\mathbf{t}\\
\mathbf{n}\\
\mathbf{b}\\
\end{array}
\right)=\left(
\begin{array}{ccr}
0\,\, & \kappa &\,\, 0\\
-\kappa\,\, & 0 & \,\,\tau\\
0\,\, & 0 &\,\, 0 \\
\end{array}
\right)\left(
\begin{array}{c}
\mathbf{t}\\
\mathbf{n}\\
\mathbf{b}\\
\end{array}
\right),
\end{equation}
where $\tau$ is the (isotropic) torsion \cite{Sachs1990}, p. 110:
\begin{equation}
\tau=\frac{\det(\alpha',\alpha'',\alpha''')}{\det(\tilde{\alpha}',\tilde{\alpha}'')}\,;\,\,\kappa=\frac{\det(\tilde{\alpha}',\tilde{\alpha}'')}{\sqrt{\langle\alpha',\alpha'\rangle_z}\,^3}\,.
\end{equation}

The above expressions for $\tau$ and $\kappa$ are also valid for any generic regular parameterization of $\alpha$. But, contrary to the Euclidean space $\mathbb{E}^3$, we can not define the torsion through the derivative of the binormal vector. However, remembering that the idea behind the definition of torsion in $\mathbb{E}^3$ is that of measuring the variation of the osculating plane, we may ask if $\tau\equiv0$ still characterizes plane curves in $\mathbb{I}^3$. It can be shown that the isotropic torsion is directly associated with the velocity of variation of the osculating plane, see \cite{Sachs1990}, pp. 112-113, and that an admissible curve lies on a non-isotropic plane if and only if $\tau$ vanishes. Observe, in addition, that contrary to the isotropic curvature, the torsion is not defined as the torsion of the top view, since this would result in $\tau=0$. The isotropic torsion is an intermediate concept depending on its top view behavior and on how much the curve leaves the plane spanned by $\alpha'$ and $\alpha''$.

\subsection{Rotation minimizing frames in simply isotropic space}

Let $\alpha:I\to\mathbb{I}^3$ be an admissible curve. A normal vector field $\mathbf{v}$ is a \emph{simply isotropic RM vector field} if $\mathbf{v}'=\mu\, \mathbf{t}$, for some function $\mu$. We can easily see that the binormal $\mathbf{b}$ is an RM vector field, $\mathbf{b}'=0$, and that, except for plane curves, the principal normal fails to be RM: $\mathbf{n}'=-\kappa\mathbf{t}+\tau\mathbf{b}$. To introduce an RM frame in $\mathbb{I}^3$, we need to look for an RM vector field in substitution to the principal normal. If $\mathbf{v}\perp\mathbf{t}$, we may write
\begin{equation}
\mathbf{v} = \mu\mathbf{n}+\nu\mathbf{b}\,,
\end{equation}
where $\mu\not=0$ (otherwise, $\mathbf{v}$ is just a multiple of $\mathbf{b}$). Now, imposing $\langle\mathbf{v},\mathbf{v}\rangle_z=1$ implies that
$
1=\mu^2\langle\mathbf{n},\mathbf{n}\rangle_z$ and then $\mu = \pm 1$.
The derivative of $\mathbf{v}$ is 
$
\mathbf{v}' 
= -\mu\kappa\,\mathbf{t}+(\mu\tau+\nu')\,\mathbf{b}$ and, assuming $\mathbf{v}$ to be an RM vector field, we have
\begin{equation}
\mathbf{v}'\parallel \mathbf{t} \Rightarrow \nu = -\mu\tau\Rightarrow \nu = -\mu \int \tau+\tau_0\,(\mbox{here } \mu=\pm1),
\end{equation}
with $\tau_0$ constant. Finally, imposing $\{\mathbf{t},\mathbf{v},\mathbf{b}\}$ has the same orientation as  $\{\mathbf{t},\mathbf{n},\mathbf{b}\}$, 
\begin{equation}
1 = \det(\mathbf{t},\mathbf{v},\mathbf{b}) =  \det(\mathbf{t},\mu\mathbf{n},\mathbf{b}) =\mu.
\end{equation}
\begin{remark}
Using Galilean trigonometric functions, i.e., $\cosg\,\phi=1$ and $\sing\,\phi=\phi$ \cite{Yaglom1979}, we can write an RM vector field $\mathbf{v}$ in terms of the Frenet frame as
\begin{equation}
\left\{
\begin{array}{c}
\mathbf{v}  =  \cosg(\theta)\,\mathbf{n}-\sing(\theta)\,\mathbf{b}\\
\theta'  =  \tau
\end{array}
\right.,
\end{equation}
in analogy to RM frames in Euclidean and Lorentz-Minkowski spaces \cite{BishopMonthly,OzdemirMJMS2008}.\label{remark::GalTrigFunc}
\end{remark}

From the discussion above it follows straightforwardly the 
\begin{theorem}
Let $\mathbf{n}_1$ be a unit normal vector field along $\alpha:I\to\mathbb{I}^3$. If $\mathbf{n}_1$ is RM and $\{\mathbf{t},\mathbf{n}_1,\mathbf{b}\}$ has the same orientation as the Frenet frame, then
\begin{equation}
 \mathbf{n}_1(s)=\mathbf{n}(s)-\left(\int_{s_0}^s \tau(x)\mathrm{d}x+\tau_0\right)\,\mathbf{b}(s)\,,
\end{equation}
where $\tau_0$ is a constant. In addition, a rotation minimizing frame $\{\mathbf{t},\mathbf{n}_1,\mathbf{n}_2=\mathbf{b}\}$ in  $\mathbb{I}^3$ satisfies
\begin{equation}
\frac{\mathrm{d}}{\mathrm{d}s}\left(
\begin{array}{c}
\mathbf{t}\\
\mathbf{n}_1\\
\mathbf{n}_2\\
\end{array}
\right)=\left(
\begin{array}{ccc}
0 & \kappa_1 & \kappa_2\\
-\kappa_1 & 0 & 0\\
0 & 0 & 0\\
\end{array}
\right)\left(
\begin{array}{c}
\mathbf{t}\\
\mathbf{n}_1\\
\mathbf{n}_2\\
\end{array}
\right),
\end{equation}
where the natural curvatures are $\kappa_1=\kappa=\kappa\,\cosg\,\theta$ and $\kappa_2=\kappa\,\theta=\kappa\,\sing\,\theta$, with  $\sing\,[\theta(s)]=\int_{s_0}^s \tau(x)\mathrm{d}x+\tau_0$.
\end{theorem}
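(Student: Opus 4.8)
The plan is to follow directly from the characterization of RM vector fields established just above the statement, recasting it as the two assertions of the theorem. First I would observe that any unit normal field $\mathbf{n}_1$ along $\alpha$ can be written as $\mathbf{n}_1 = \mu\mathbf{n} + \nu\mathbf{b}$ for functions $\mu,\nu$ of $s$, since $\{\mathbf{t},\mathbf{n},\mathbf{b}\}$ is a basis and the condition $\mathbf{n}_1\perp\mathbf{t}$ kills the $\mathbf{t}$-component. The key point, and where the degeneracy of the isotropic metric enters, is that imposing $\langle\mathbf{n}_1,\mathbf{n}_1\rangle_z = 1$ only constrains the top-view part: because $\mathbf{b}=(0,0,1)$ has vanishing isotropic norm and is $\langle\cdot,\cdot\rangle_z$-orthogonal to $\mathbf{n}$, one obtains $\mu^2 = 1$, i.e. $\mu = \pm 1$, while $\nu$ is left entirely free at this stage.

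To pin down $\nu$ I would impose the RM condition $\mathbf{n}_1' \parallel \mathbf{t}$. Differentiating (with $\mu$ constant) and using the simply isotropic Frenet relations $\mathbf{n}' = -\kappa\mathbf{t} + \tau\mathbf{b}$ and $\mathbf{b}' = 0$ gives $\mathbf{n}_1' = -\mu\kappa\,\mathbf{t} + (\mu\tau + \nu')\,\mathbf{b}$. Requiring the $\mathbf{b}$-component to vanish yields the first-order equation $\nu' = -\mu\tau$, so $\nu = -\mu\int_{s_0}^s\tau(x)\,\mathrm{d}x$ up to an additive constant $\tau_0$. Fixing the orientation via $1 = \det(\mathbf{t},\mathbf{n}_1,\mathbf{b}) = \mu\det(\mathbf{t},\mathbf{n},\mathbf{b}) = \mu$ forces $\mu = 1$, which produces precisely the claimed expression for $\mathbf{n}_1$ (the sign of the arbitrary constant $\tau_0$ being immaterial).

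For the equations of motion, the strategy is to invert the relation between the two frames. Writing $\theta(s) = \int_{s_0}^s\tau + \tau_0$ and $\mathbf{n}_2 = \mathbf{b}$, the formula $\mathbf{n}_1 = \mathbf{n} - \theta\,\mathbf{b}$ rearranges to $\mathbf{n} = \mathbf{n}_1 + \theta\,\mathbf{n}_2$; substituting this into $\mathbf{t}' = \kappa\mathbf{n}$ lets me read off $\kappa_1 = \kappa$ and $\kappa_2 = \kappa\theta$. The RM computation above already gives $\mathbf{n}_1' = -\kappa\,\mathbf{t} = -\kappa_1\,\mathbf{t}$, and $\mathbf{n}_2' = \mathbf{b}' = 0$, which assembles into the stated matrix. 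The Galilean reading $\kappa_1 = \kappa\,\cosg\theta$, $\kappa_2 = \kappa\,\sing\theta$ with $\sing\theta = \theta$ then follows from Remark \ref{remark::GalTrigFunc}, where $\cosg\phi = 1$ and $\sing\phi = \phi$.

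I do not anticipate a genuine obstacle, since once the RM definition and the Frenet equations are in hand the computation is essentially forced. The only step requiring real care is the degeneracy of $\langle\cdot,\cdot\rangle_z$: unlike the Euclidean case, normalization alone cannot determine $\mathbf{n}_1$, so the RM condition must be invoked precisely to fix the otherwise-undetermined binormal component $\nu$.
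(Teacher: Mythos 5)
Your proposal is correct and is essentially the paper's own argument: the author likewise writes the normal field as $\mathbf{v}=\mu\mathbf{n}+\nu\mathbf{b}$, uses the degenerate normalization $\langle\mathbf{v},\mathbf{v}\rangle_z=1$ to force $\mu=\pm1$, the RM condition to obtain $\nu'=-\mu\tau$ (hence $\nu=-\mu\int\tau+\tau_0$), and the orientation condition $\det(\mathbf{t},\mathbf{v},\mathbf{b})=\mu=1$ to fix the sign, with the frame equations then read off by substituting $\mathbf{n}=\mathbf{n}_1+\theta\mathbf{n}_2$ into $\mathbf{t}'=\kappa\mathbf{n}$. You also correctly flag the one subtlety the paper relies on, namely that the degeneracy of $\langle\cdot,\cdot\rangle_z$ leaves the $\mathbf{b}$-component undetermined by normalization alone, so nothing is missing.
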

%\begin{proof}
%The expression for $\mathbf{n}_1$ follows from the discussion above. On the other hand, we have for the derivative of $\mathbf{n}_1$  and $\mathbf{n}_2=\mathbf{b}$
%\begin{equation}
%\mathbf{n}_1'=-\kappa\,\mathbf{t}+\tau\,\mathbf{b}-\tau\,\mathbf{b}=-\kappa\,\mathbf{t}\,,\mathbf{n}_2'=0.
%\end{equation}
%Finally, taking into account that $\mathbf{n}=\mathbf{n}_1+\theta\,\mathbf{b}$, we find
%\begin{equation}
%\mathbf{t}' = \kappa\,\mathbf{n}=\kappa\,\mathbf{n}_1+\kappa\theta\,\mathbf{b}\,.
%\end{equation}
%From the equalities above we find the desired equations of motion for the RM trihedron $\{\mathbf{t},\mathbf{n}_1,\mathbf{b}\}$.
%\qed
%\end{proof}

We can relate the RM  curvatures $\kappa_1,\kappa_2$ to the Frenet ones $\kappa,\tau$ as
\begin{equation}
\left\{
\begin{array}{c}
\kappa_1(s) = \kappa(s)\,\cosg\,\theta(s)\\
\kappa_2(s) = \kappa(s)\,\sing\,\theta(s)\\
\end{array}
\right.\,\mbox{ and  }\,\theta'(s)= \tau(s),\label{eq::IsoRelBetweenRMandFrenet}
\end{equation}
which also shows that two RM frames differ by an additive constant, $\theta\mapsto\theta+\theta_0$, due to the action of the group $SOI(2)$ of plane isotropic rotations on the normal planes:
\begin{equation}
SOI(2)=\left\{M\in M_{2\times2}(\mathbb{R}):M=\left(
\begin{array}{cc}
\cosg\theta & 0\\
\sing\theta & \cosg\theta\\
\end{array}
\right)\right\}.
\end{equation}

This issue can be further clarified with the help of the ring of dual numbers $\mathbb{D}$ in the isotropic plane $\mathbb{I}^2$ \cite{Sachs1987}, since the normal plane is always isotropic (see the Appendix for the definition of $\mathbb{D}$). As in $\mathbb{E}^2$, where we may use a unit complex to describe a rotation\footnote{The same applies in $\mathbb{E}_1^2$ through the use of Lorentz numbers \cite{BirmanMonthly1984}: see subsection \ref{subsec::DescSemiIsoGeom} below.}, here we use a unit dual $p=1+\phi\,\varepsilon=\cosg\phi+\varepsilon\,\sing\phi$ to describe (Galilean) rotations in $\mathbb{I}^2$: $p\mapsto a\,p$ (see Fig. \ref{fig::DiagramSphPlaneCurv} in the Appendix). Indeed, identifying  $(x_1,y_1)\in\mathbb{I}^2$ with $x_1+y_1\varepsilon\in\mathbb{D}$,  a rigid motion in $\mathbb{I}^2$ is given by
\begin{equation} 
\left[
\begin{array}{c}
x_2\\
y_2\\
\end{array}
\right]=
\left[\begin{array}{cc}
1 & 0\\
\phi & 1\\
\end{array}
\right]\left[
\begin{array}{c}
x_1\\
y_1\\
\end{array}
\right]+
\left[
\begin{array}{c}
a\\
b\\
\end{array}
\right]=
\left[
\begin{array}{cc}
\cosg\phi & 0\\
\sing\phi & \cosg\phi\\
\end{array}
\right]\left[
\begin{array}{c}
x_1\\
y_1\\
\end{array}
\right]+
\left[
\begin{array}{c}
a\\
b\\
\end{array}
\right],
\end{equation}
where we used the linear (matrix) representation for $\mathbb{D}$ in Eq. (\ref{eq::LinearRepDualNumbers}).

In short, with the help of the ring of dual numbers $\mathbb{D}$, we can interpret an isotropic RM frame as a frame that minimizes isotropic (or Galilean) rotations. 

\subsection{Moving bivectors in simply isotropic space}

In $\mathbb{I}^3$ it is not possible to define a vector product with the same invariance significance as in Euclidean space. However, one can still do some interesting investigations by employing in $\mathbb{I}^3$ the usual vector product $\vecpe$ from  $\mathbb{E}^3$. Associated with the isotropic Frenet frame, one introduces a (moving) bivector frame as \cite{Sachs1990} 
\begin{equation}
\mathcal{T} = \mathbf{n}\vecpe\mathbf{b}=\tilde{\mathbf{t}},\,
\mathcal{N} = \mathbf{b}\vecpe\mathbf{t},\,\mbox{ and }\,
\mathcal{B} = \mathbf{t}\vecpe\mathbf{n},
\end{equation}
which results in a linearly independent frame, $\det(\mathcal{T},\mathcal{N},\mathcal{B})=\det(\mathbf{t},\mathbf{n},\mathbf{b})=1$ (\cite{Sachs1990}, Eqs. (7.43a-c), p. 130), and also leads to the equations
\begin{equation}
\mathcal{T}\,'=\kappa\,\mathcal{N},\,\mathcal{N}\,'=-\kappa\,\mathcal{T},\,\mbox{ and }\,\mathcal{B}\,'=-\tau\,\mathcal{N}.
\end{equation}

Analogously, we shall introduce  a (moving) RM bivector frame as
\begin{equation}
\left\{
\begin{array}{lcl}
\mathcal{T} &=& \mathbf{n}_1\vecpe\mathbf{n}_2 = \mathbf{n}\vecpe\mathbf{b}=\tilde{\mathbf{t}}\\
\mathcal{N}_1 &=& \mathbf{n}_2\vecpe\mathbf{t}=\mathcal{N}\\
\mathcal{N}_2 &=& \mathbf{t}\vecpe\mathbf{n}_1
\end{array}
\right.\,,
\end{equation}
which satisfies
\begin{proposition}
The moving frame $\{\mathcal{T},\mathcal{N}_1,\mathcal{N}_2\}$ forms a basis for $\mathbb{R}^3$. In addition, a moving RM bivector frame satisfies the equation
\begin{equation}
\frac{\mathrm{d}}{\mathrm{d}s}\left(
\begin{array}{c}
\mathcal{T}\\
\mathcal{N}_1\\
\mathcal{N}_2\\
\end{array}
\right)=\left(
\begin{array}{ccr}
0 & \kappa_1 & \,\,0\\
-\kappa_1 & 0 & \,\,0\\
-\kappa_2 & 0 & \,\,0\\
\end{array}
\right)\left(
\begin{array}{c}
\mathcal{T}\\
\mathcal{N}_1\\
\mathcal{N}_2\\
\end{array}
\right)\,,
\end{equation}
where $\kappa_1=\kappa=\kappa\,\cosg\,\theta$, $\kappa_2=\kappa\,\theta=\kappa\,\sing\,\theta$, and $\sing\,(\theta)=\int \tau+\tau_0$.
\end{proposition}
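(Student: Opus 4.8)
The plan is to establish the two claims of the proposition in sequence: first the linear independence of $\{\mathcal{T},\mathcal{N}_1,\mathcal{N}_2\}$, and then the system of differential equations governing their motion. For the first part, I would compute the determinant $\det(\mathcal{T},\mathcal{N}_1,\mathcal{N}_2)$ directly from the definitions. Since $\mathcal{T}=\mathbf{n}_1\vecpe\mathbf{n}_2$, $\mathcal{N}_1=\mathbf{n}_2\vecpe\mathbf{t}$, and $\mathcal{N}_2=\mathbf{t}\vecpe\mathbf{n}_1$, this is the classic cofactor/reciprocal-frame construction using the Euclidean product $\vecpe$. The standard identity gives $\det(\mathbf{u}\vecpe\mathbf{v},\mathbf{v}\vecpe\mathbf{w},\mathbf{w}\vecpe\mathbf{u})=[\det(\mathbf{u},\mathbf{v},\mathbf{w})]^2$, so with $\{\mathbf{u},\mathbf{v},\mathbf{w}\}=\{\mathbf{t},\mathbf{n}_1,\mathbf{n}_2\}$ I would obtain $\det(\mathcal{T},\mathcal{N}_1,\mathcal{N}_2)=[\det(\mathbf{t},\mathbf{n}_1,\mathbf{n}_2)]^2=1$, since the RM frame has the same orientation and unit determinant as the Frenet frame. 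This confirms the bivector frame is a basis for $\mathbb{R}^3$.

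For the differential equations, the cleanest route is to differentiate each bivector using the product rule for $\vecpe$ together with the RM frame equations $\mathbf{t}'=\kappa_1\mathbf{n}_1+\kappa_2\mathbf{n}_2$, $\mathbf{n}_1'=-\kappa_1\mathbf{t}$, and $\mathbf{n}_2'=0$ (recall $\mathbf{n}_2=\mathbf{b}$). For instance, $\mathcal{T}'=(\mathbf{n}_1\vecpe\mathbf{n}_2)'=\mathbf{n}_1'\vecpe\mathbf{n}_2+\mathbf{n}_1\vecpe\mathbf{n}_2'=(-\kappa_1\mathbf{t})\vecpe\mathbf{n}_2$, which equals $\kappa_1(\mathbf{n}_2\vecpe\mathbf{t})=\kappa_1\mathcal{N}_1$ after using antisymmetry of $\vecpe$. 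Similarly I would compute $\mathcal{N}_1'=(\mathbf{n}_2\vecpe\mathbf{t})'=\mathbf{n}_2\vecpe\mathbf{t}'=\mathbf{n}_2\vecpe(\kappa_1\mathbf{n}_1+\kappa_2\mathbf{n}_2)=\kappa_1(\mathbf{n}_2\vecpe\mathbf{n}_1)$, and $\mathcal{N}_2'=(\mathbf{t}\vecpe\mathbf{n}_1)'=\mathbf{t}'\vecpe\mathbf{n}_1+\mathbf{t}\vecpe\mathbf{n}_1'=(\kappa_2\mathbf{n}_2)\vecpe\mathbf{n}_1$, since the $\kappa_1\mathbf{n}_1\vecpe\mathbf{n}_1$ and $\mathbf{t}\vecpe(-\kappa_1\mathbf{t})$ terms vanish.

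The one genuinely delicate point — and the step I expect to be the main obstacle — is re-expressing these derivatives back in terms of the bivector basis $\{\mathcal{T},\mathcal{N}_1,\mathcal{N}_2\}$ rather than in the original vector basis. The quantity $\mathbf{n}_2\vecpe\mathbf{n}_1=-(\mathbf{n}_1\vecpe\mathbf{n}_2)=-\mathcal{T}$ is immediate, giving $\mathcal{N}_1'=-\kappa_1\mathcal{T}$. For $\mathcal{N}_2'=\kappa_2(\mathbf{n}_2\vecpe\mathbf{n}_1)$ the same substitution yields $\mathcal{N}_2'=-\kappa_2\mathcal{T}$, matching the bottom row of the claimed matrix. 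Here one must be careful that the cross products $\mathbf{n}_2\vecpe\mathbf{t}$ and so on are taken with the Euclidean $\vecpe$, which does not respect the isotropic inner product, so I would verify each identification purely at the level of the ordered basis $\{\mathbf{t},\mathbf{n}_1,\mathbf{n}_2\}$ and its reciprocal, avoiding any appeal to isotropic orthogonality.

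Assembling the three computed derivatives, $\mathcal{T}'=\kappa_1\mathcal{N}_1$, $\mathcal{N}_1'=-\kappa_1\mathcal{T}$, and $\mathcal{N}_2'=-\kappa_2\mathcal{T}$, reproduces exactly the stated $3\times3$ system, and the identifications $\kappa_1=\kappa=\kappa\,\cosg\,\theta$, $\kappa_2=\kappa\,\sing\,\theta$ with $\sing(\theta)=\int\tau+\tau_0$ follow verbatim from the RM frame theorem already established. The proof is therefore essentially a disciplined bookkeeping exercise in the algebra of $\vecpe$, with the sole conceptual care being to treat $\vecpe$ as a formally borrowed Euclidean operation whose only role is to generate the reciprocal frame.
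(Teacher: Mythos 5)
Your proposal is correct and is precisely the verification the paper intends: the proposition is stated there without an explicit proof (modeled on the analogous Frenet bivector identities quoted from Sachs), and your argument---the adjugate identity $\det(\mathbf{u}\vecpe\mathbf{v},\mathbf{v}\vecpe\mathbf{w},\mathbf{w}\vecpe\mathbf{u})=[\det(\mathbf{u},\mathbf{v},\mathbf{w})]^2$ combined with $\det(\mathbf{t},\mathbf{n}_1,\mathbf{n}_2)=\det(\mathbf{t},\mathbf{n},\mathbf{b})=1$, followed by product-rule differentiation using $\mathbf{t}'=\kappa_1\mathbf{n}_1+\kappa_2\mathbf{n}_2$, $\mathbf{n}_1'=-\kappa_1\mathbf{t}$, $\mathbf{n}_2'=\mathbf{b}'=0$---is exactly the routine computation it presupposes. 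All three derivative calculations and the sign bookkeeping (in particular $\mathbf{n}_2\vecpe\mathbf{n}_1=-\mathcal{T}$, and that your determinant identity yields the triple $(\mathcal{N}_2,\mathcal{T},\mathcal{N}_1)$, a cyclic permutation leaving the determinant unchanged) check out.
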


\section{Simply isotropic spherical curves}

\subsection{Isotropic osculating spheres}

Due to the degeneracy of the isotropic metric, some geometric concepts can not be defined in $\mathbb{I}^3$ by just using 
$\langle\cdot,\cdot\rangle_z$. This is the case for spheres. We define \emph{simply isotropic spheres} as connected and irreducible surfaces of degree 2 given by the 4-parameter family
$
(x^2+y^2)+2c_1x+2c_2y+2c_3z+c_4=0,$ %\label{eq::IsoSpheres}
 where $c_i\in\mathbb{R}$ \cite{Sachs1990}, p. 66. In addition, up to a rigid motion (in $\mathbb{I}^3$), we can express a sphere in one of the two normal forms below
\begin{enumerate}
\item sphere of parabolic type: $
z = \frac{1}{2p}(x^2+y^2)\,\mbox{ with }\,p\not=0
$; and 
\item sphere of cylindrical type: $x^2+y^2=r^2\,\mbox{ with }\,r>0.$
\end{enumerate}

The quantities $p$ and $r$ are isotropic invariants. Moreover, spheres of cylindrical type are precisely the set of points equidistant from a given center: $\langle\mathbf{x}-P,\mathbf{x}-P\rangle_z = r^2.$ Notice however, that the center $P$ of a cylindrical sphere is not uniquely defined, any other $Q$ with the same top view as $P$, i.e., $\tilde{Q}=\tilde{P}$, would do the job. We can remedy this by assuming $P=(x,y,0)$.

An osculating sphere of an admissible curve $\alpha$ at a point $\alpha(s_0)$ is the (isotropic) sphere having contact of order 3 with $\alpha$. Its position vector $\mathbf{x}$ satisfies
\begin{equation}
\lambda\langle\mathbf{x}-\alpha_0,\mathbf{x}-\alpha_0\rangle_z+\langle\mathbf{u},\mathbf{x}-\alpha_0\rangle = 0\,,\label{eq::IsoOscSphere}
\end{equation}
where $\alpha_0=\alpha(s_0)$, $\langle\cdot,\cdot\rangle$ is the inner product in  $\mathbb{E}^3$, and $\lambda\in\mathbb{R}$ and $\mathbf{u}\in\mathbb{R}^3$ are constants to be determined, Eq. (7.18) of  \cite{Sachs1990}.

\subsection{Characterization of spherical curves in simply isotropic space}

Our approach to spherical curves is based on order of contact. More precisely, we investigate osculating spheres in $\mathbb{I}^3$ by using RM frames and their associated bivector frames. Then, we use that a curve is spherical when its osculating spheres are all equal to the sphere that contains the curve (see proof of Theorem \ref{theo:BishopCharacSpherericalCurves}). 

Defining $F(\mathbf{x})=\lambda\langle\mathbf{x}-\alpha_0,\mathbf{x}-\alpha_0\rangle_z+\langle\mathbf{u},\mathbf{x}-\alpha_0\rangle$, where $\alpha_0=\alpha(s_0)$ and $\lambda,\mathbf{u}$ are constants to be determined, we have for the derivatives of $F\,\circ\,\alpha$
\begin{eqnarray}
\left\{
\begin{array}{ccc}
F' & = & 2\lambda\langle\alpha-\alpha_0,\mathbf{t}\rangle_z+\langle\mathbf{u},\mathbf{t}\rangle,\\[4pt]
F'' & = & 2\lambda\langle\mathbf{t},\mathbf{t}\rangle_z+2\lambda\langle\alpha-\alpha_0,\sum_i\kappa_i\mathbf{n}_i\rangle_z+\langle\mathbf{u},\sum_i\kappa_i\mathbf{n}_i\rangle\\[4pt]
F''' & = & 2\lambda\langle\alpha-\alpha_0,-\kappa_1^2\mathbf{t}+\sum_i\kappa_i'\mathbf{n}_i\rangle_z+\langle\mathbf{u},-\kappa_1^2\mathbf{t}+\sum_i\kappa_i'\mathbf{n}_i\rangle\\
\end{array}
\right..
\end{eqnarray}
Imposing contact of order 3, $(F\circ\alpha)'=(F\circ\alpha)''=(F\circ\alpha)'''=0$ at $s_0$, gives
\begin{equation}
\left\{
\begin{array}{c}
\langle\mathbf{u},\mathbf{t}(s_0)\rangle = 0\\
2\lambda = - \langle\mathbf{u},\sum_i\kappa_i(s_0)\mathbf{n}_i(s_0)\rangle\\
\langle\mathbf{u},\sum_i\kappa_i'(s_0)\mathbf{n}_i(s_0)\rangle = 0\\
\end{array}
\right..
\end{equation}
From the first and third equations above, we find that
\begin{equation}
\mathbf{u}=\rho\,[\mathbf{t}\vecpe(\kappa_1'\mathbf{n}_1+\kappa_2'\mathbf{n}_2)](s_0)=\rho\,[\kappa_1'\mathcal{N}_2-\kappa_2'\mathcal{N}_1](s_0),
\end{equation}
for some constant $\rho\not=0$. On the other hand, from the second equation we find
\begin{equation}
2\lambda+\rho\,[\kappa_1'\kappa_2\langle\mathbf{n}_2,\mathcal{N}_2\rangle-\kappa_1\kappa_2'\langle\mathbf{n}_1,\mathcal{N}_1\rangle](s_0)=0.
\end{equation}
The reader can easily verify that $\langle\mathbf{n}_i,\mathcal{N}_i\rangle = \det(\mathbf{t},\mathbf{n}_1,\mathbf{n}_2)=1$, and then we can rewrite the expression above as
\begin{equation}
2\lambda = \rho\,[\kappa_1\kappa_2'-\kappa_1'\kappa_2](s_0)=\rho\,\tau(s_0)\kappa^2(s_0),
\end{equation}
where we have used the expressions of $(\kappa_1,\kappa_2)$ in terms of $(\kappa,\tau)$, Eq.  (\ref{eq::IsoRelBetweenRMandFrenet}).

In short, the equation for the isotropic osculating sphere (\ref{eq::IsoOscSphere}), with respect to an RM frame and its associated bivector frame, can be written as
\begin{equation}
\tilde{\mathbf{x}}^2-2\left\langle\mathbf{x},\tilde{\alpha}_0+\frac{\kappa_2'\mathcal{N}_1-\kappa_1'\mathcal{N}_2}{\tau\kappa^2}\vert_{s_0}\right\rangle+2\left[\frac{\tilde{\alpha}_0^2}{2}-\left\langle\alpha_0,\frac{\kappa_1'\mathcal{N}_2-\kappa_2'\mathcal{N}_1}{\tau\kappa^2}\vert_{s_0}\right\rangle\right]=0\,,
\end{equation}
where $\tilde{\mathbf{x}}^2=\langle\mathbf{x},\mathbf{x}\rangle_z$.
\begin{theorem}
An admissible $C^4$ regular curve $\alpha:I\to \mathbb{I}^3$ lies on the surface of a sphere if and only if its normal development, i.e., the curve $(\kappa_1(s),\kappa_2(s))$, lies on a line not passing through the origin. In addition, $\alpha$ is a spherical curve of cylindrical type with radius $r$ if and only if $\kappa$ is constant and equal to $r^{-1}$. 
\end{theorem}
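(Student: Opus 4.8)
The plan is to transplant the Euclidean argument of Theorem~\ref{theo:BishopCharacSpherericalCurves}: a curve is spherical exactly when all of its osculating spheres coincide, so I would reduce the statement to deciding when the osculating sphere is independent of the contact point $s_0$. Writing the sphere equation obtained just above as $\tilde{\mathbf{x}}^2-2\langle\mathbf{x},\mathbf{P}\rangle+2Q=0$ with $\mathbf{P}=\tilde{\alpha}+\beta_1\mathcal{N}_1+\beta_2\mathcal{N}_2$, $\beta_1=\kappa_2'/(\tau\kappa^2)$, $\beta_2=-\kappa_1'/(\tau\kappa^2)$, and $Q$ the scalar term appearing there, the normalization of the $\tilde{\mathbf{x}}^2$-coefficient to $1$ means that the sphere is constant in $s_0$ if and only if both $\mathbf{P}$ and $Q$ are constant.

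First I would differentiate $\mathbf{P}$ using the RM bivector equations $\mathcal{T}'=\kappa_1\mathcal{N}_1$, $\mathcal{N}_1'=-\kappa_1\mathcal{T}$, $\mathcal{N}_2'=-\kappa_2\mathcal{T}$ of the preceding Proposition together with $\tilde{\alpha}'=\mathcal{T}$. This gives $\mathbf{P}'=(1-\beta_1\kappa_1-\beta_2\kappa_2)\,\mathcal{T}+\beta_1'\mathcal{N}_1+\beta_2'\mathcal{N}_2$, and the contact identity $\beta_1\kappa_1+\beta_2\kappa_2=(\kappa_1\kappa_2'-\kappa_1'\kappa_2)/(\tau\kappa^2)=1$ annihilates the $\mathcal{T}$-term, leaving $\mathbf{P}'=\beta_1'\mathcal{N}_1+\beta_2'\mathcal{N}_2$. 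Since $\{\mathcal{N}_1,\mathcal{N}_2\}$ is linearly independent, $\mathbf{P}$ is constant if and only if $\beta_1$ and $\beta_2$ are. I would then check that this already forces $Q'=0$: from the relation $Q=\langle\alpha,\mathbf{P}\rangle-\tfrac12\tilde{\alpha}^2$ (which expresses that $\alpha(s_0)$ lies on its own osculating sphere) together with the triple-product identities $\langle\mathbf{t},\mathcal{N}_1\rangle=\langle\mathbf{t},\mathcal{N}_2\rangle=0$, the derivative collapses to $\langle\mathbf{t},\tilde{\alpha}\rangle-\langle\tilde{\alpha},\mathcal{T}\rangle=0$ since $\mathcal{T}=\tilde{\mathbf{t}}$ and $\tilde{\alpha}$ is a top view. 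Finally, $\beta_1,\beta_2$ constant says precisely that $(\kappa_1,\kappa_2)$ satisfies a fixed affine relation $\beta_1\kappa_1+\beta_2\kappa_2=1$, i.e.\ lies on a line not through the origin; conversely any such line supplies constant coefficients solving the two contact equations by Cramer's rule. This settles the first equivalence.

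For the cylindrical/radius claim I would argue most cleanly on the top view. Since $\mathrm{d}_z$ depends only on top views, $\alpha$ lies on a cylindrical sphere $\langle\mathbf{x}-C,\mathbf{x}-C\rangle_z=r^2$ with $C=(\tilde{C},0)$ if and only if $\tilde{\alpha}$ lies on a Euclidean circle of radius $r$ about $\tilde{C}$; and because $\kappa$ is exactly the plane curvature of $\tilde{\alpha}$, this happens if and only if $\kappa\equiv r^{-1}$. To reconcile this with the normal development I would also note that the type of the osculating sphere is detected by its isotropic ($z$-)coefficient --- a $\mathcal{B}_6$-invariant that separates the two normal forms, the cylindrical one having no $z$-term --- and that among the three bivectors only $\mathcal{N}_2=\mathbf{t}\vecpe\mathbf{n}_1$ carries a nonzero $z$-component, equal to $\det(\tilde{\mathbf{t}},\tilde{\mathbf{n}})=1$ since $\tilde{\mathbf{n}}_1=\tilde{\mathbf{n}}$ and the top-view Frenet frame is positively oriented. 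Hence the $z$-coefficient of $\mathbf{P}$ is $\beta_2=-\kappa_1'/(\tau\kappa^2)$, and the osculating sphere is of cylindrical type precisely when $\kappa_1'=\kappa'=0$ (recall $\kappa_1=\kappa$), in agreement with the top-view computation.

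The step I expect to be most delicate is the reduction ``spherical $\Leftrightarrow$ osculating sphere constant,'' which presupposes that the order-3 contact sphere is unique; this breaks down when $\tau=0$, where $\lambda=\tfrac12\rho\tau\kappa^2$ vanishes and the quadric degenerates. I would therefore dispose of the planar case $\tau\equiv0$ separately: then $\theta$ is constant, so $\kappa_2=\kappa\,\sing\theta$ is a fixed multiple of $\kappa_1$ and $(\kappa_1,\kappa_2)$ lies on a line \emph{through} the origin, with $\alpha$ not spherical --- matching the dichotomy of Theorem~\ref{theo:BishopCharacSpherericalCurves}. The remaining care points are purely bookkeeping: fixing the orientation so that $\kappa>0$ coincides with the top-view plane curvature, and invoking the $C^4$ hypothesis, which is exactly what guarantees that $\kappa_i''$ --- and hence $\mathbf{P}'$ --- exist.
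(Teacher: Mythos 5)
Your proposal is correct in its main line and follows essentially the same strategy as the paper's proof: reduce sphericity to constancy of the osculating sphere written in the RM bivector frame, compute $\mathbf{P}'=\beta_1'\mathcal{N}_1+\beta_2'\mathcal{N}_2$ using the bivector equations and the cancellation $\beta_1\kappa_1+\beta_2\kappa_2=1$, and read off the line not through the origin from the contact identity with constant coefficients. Two of your steps refine the paper's exposition. First, you note that the scalar term $Q$ is automatically constant once $\mathbf{P}$ is, via $Q=\langle\alpha,\mathbf{P}\rangle-\tfrac{1}{2}\tilde{\alpha}^2$ and $\langle\mathbf{t},\mathcal{N}_i\rangle=0$; this is accurate, and it is consistent with the paper's computation, where the scalar condition yields $(1+a_1\kappa_1+a_2\kappa_2)\langle\alpha,\mathbf{t}\rangle_z=0$ with $a_1=-\beta_1$, $a_2=-\beta_2$ --- a factor that in fact vanishes identically by the Wronskian identity $\kappa_1\kappa_2'-\kappa_1'\kappa_2=\tau\kappa^2$, so the line relation really comes, as you say, from the pointwise contact identity once the $\beta_i$ are known constant. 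Second, you make the converse explicit (Cramer's rule, using the nonvanishing Wronskian $\tau\kappa^2$), which the paper leaves implicit. For the cylindrical claim you argue on the top view (circle of radius $r$ iff plane curvature $\equiv r^{-1}$), whereas the paper differentiates $\langle\alpha-P,\alpha-P\rangle_z=r^2$ intrinsically, obtaining $\alpha-P=a_1\mathbf{n}_1+a_2\mathbf{n}_2$ and $1+a_1\kappa=0$, and exhibits the center $P=\alpha+\kappa^{-1}\mathbf{n}_1$ for the converse; both are valid, but observe that the paper's computation deliberately bypasses the osculating-sphere machinery, so it survives when $\tau=0$ --- which matters exactly for cylindrical curves, where $\kappa_2'=\kappa\tau$ may vanish and the normalized sphere equation degenerates.

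The one assertion that is wrong is in your disposal of the degenerate case: $\tau\equiv0$ does \emph{not} force $\alpha$ to be non-spherical. A circle in a non-isotropic plane, e.g. $\alpha(s)=(\cos s,\sin s,0)$, is admissible with $\tau\equiv0$ and lies on the cylindrical sphere $x^2+y^2=1$; consistently, its normal development is the single point $(\kappa,\kappa\theta_0)$, which lies on a line not through the origin (as well as on one through it), so the theorem is not contradicted --- but your dichotomy is. The correct statement is: if $\tau\equiv0$ and $\kappa$ is non-constant, the development lies only on lines through the origin and $\alpha$ is planar and non-spherical; if $\kappa$ is constant, $\alpha$ is spherical of cylindrical type regardless of $\tau$. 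This is precisely why the paper splits its proof into the non-cylindrical versus cylindrical cases, handling the latter without any appeal to osculating spheres, rather than splitting into planar versus non-planar as you do.
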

\begin{proof}
The condition of being spherical implies that the isotropic osculating spheres are all the same and equal to the sphere that contains the curve. Then
\begin{equation}
\frac{\mathrm{d}}{\mathrm{d}s}\left[\tilde{\alpha}+\frac{\kappa_2'\mathcal{N}_1}{\tau\kappa^2}-\frac{\kappa_1'\mathcal{N}_2}{\tau\kappa^2}\right]=0\label{eq::CondToBeSphericalVector}
\end{equation}
and
\begin{equation}
\frac{\mathrm{d}}{\mathrm{d}s}\left[\frac{\tilde{\alpha}^2}{2}-\left\langle\alpha,\frac{\kappa_1'\mathcal{N}_2-\kappa_2'\mathcal{N}_1}{\tau\kappa^2}\right\rangle\right]=\frac{\mathrm{d}}{\mathrm{d}s}\,\left\langle\alpha,\frac{\tilde{\alpha}}{2}-\frac{\kappa_1'\mathcal{N}_2-\kappa_2'\mathcal{N}_1}{\tau\kappa^2}\right\rangle=0\,.\label{eq::CondToBeSphericalScalar}
\end{equation}
%where we used that $\tilde{\alpha}^2=\langle\alpha,\alpha\rangle_z=\langle\alpha,\tilde{\alpha}\rangle$.

The first condition gives
\begin{eqnarray}
0 & = & \tilde{\mathbf{t}}+\left(\frac{\kappa_2'}{\tau\kappa^2}\right)'\mathcal{N}_1-\left(\frac{\kappa_1'}{\tau\kappa^2}\right)'\mathcal{N}_2+\frac{\kappa_2'}{\tau\kappa^2}(-\kappa_1\mathcal{T})-\frac{\kappa_1'}{\tau\kappa^2}(-\kappa_2\mathcal{T})\nonumber\\
& = & \left(\frac{\kappa_2'}{\tau\kappa^2}\right)'\mathcal{N}_1-\left(\frac{\kappa_1'}{\tau\kappa^2}\right)'\mathcal{N}_2,
\end{eqnarray}
which, by taking into account the linear independence of $\{\mathcal{N}_1,\mathcal{N}_2\}$, implies
\begin{equation}
a_1:=-\frac{\kappa_2'}{\tau\kappa^2}=\mbox{constant};\,a_2:=\frac{\kappa_1'}{\tau\kappa^2}=\mbox{constant}.
\end{equation}
On the other hand, condition (\ref{eq::CondToBeSphericalScalar}) implies
\begin{equation}
0 = \langle\alpha,\frac{\mathrm{d}}{\mathrm{d}s}\left(\frac{\tilde{\alpha}}{2}-\sum_ia_i\mathcal{N}_i\right)\rangle+\langle\mathbf{t},\frac{\tilde{\alpha}}{2}-\sum_ia_i\mathcal{N}_i\rangle= (1+a_1\kappa_1+a_2\kappa_2)\langle\alpha,\mathbf{t}\rangle_z,
\end{equation}
where we used that $\langle\tilde{\alpha},\mathbf{t}\rangle=\langle\alpha,\tilde{\mathbf{t}}\rangle=\langle\alpha,\mathbf{t}\rangle_z$ to obtain the second equality. If $\alpha$ is not of cylindrical type, $\langle\alpha,\alpha\rangle_z$ is not a constant, i.e., $\langle\alpha,\mathbf{t}\rangle_z\not=0$. Then, for a parabolic spherical curve, $(\kappa_1,\kappa_2)$ lies on a line not passing through the origin.

On the other hand, if $\alpha$ is of cylindrical type $\langle\alpha(s)-P,\alpha(s)-P\rangle_z=r^2$, then
\begin{equation}
\langle\mathbf{t},\alpha-P\rangle_z=0\Rightarrow\alpha-P=a_1\mathbf{n}_1+a_2\mathbf{n}_2\,.\label{eq::InnerProdTangwithalphaP}
\end{equation}
Here, $a_1=\langle\alpha-P,\mathbf{n}_1\rangle_z\Rightarrow a_1'=\langle\mathbf{t},\mathbf{n}_1\rangle_z+\langle\alpha-P,-\kappa_1\mathbf{t}\rangle_z=0$ and $a_1$ is a constant.

Taking the derivative of Eq. (\ref{eq::InnerProdTangwithalphaP}) gives
\begin{eqnarray}
0 & = & \langle\mathbf{t},\mathbf{t}\rangle_z+\langle\kappa_1\mathbf{n}_1+\kappa_2\mathbf{n}_2,\alpha-P\rangle_z=  1+a_1\kappa.
\end{eqnarray}
Hence, the curvature $\kappa=\kappa_1$ is a constant and, in addition, $r^2=\langle\alpha-P,\alpha-P\rangle_z=\langle a_1\mathbf{n}_1+a_2\mathbf{n}_2,a_1\mathbf{n}_1+a_2\mathbf{n}_2\rangle_z=a_1^2$.

Reciprocally, if $\kappa$ is a (non-zero) constant, define $P=\alpha+\kappa^{-1}\mathbf{n}_1$. Taking the derivative gives $P'=\mathbf{t}+\kappa^{-1}(-\kappa\mathbf{t})=0$ and then $P$ is a constant. Clearly we have $\langle\alpha-P,\alpha-P\rangle_z=1/\kappa^2$. 
\end{proof}
\begin{remark}
In the proof above, we could also use the Frenet frame instead of an RM one. In this case, spherical curves are characterized by $\kappa'/(\kappa^2\tau)=\mbox{constant}$.
\end{remark}
\begin{proposition}
An admissible curve $\alpha:I\to \mathbb{I}^3$ lies on a plane if and only if its normal development $(\kappa_1(s),\kappa_2(s))$ lies on a line passing through the origin. 
\end{proposition}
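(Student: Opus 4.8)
The plan is to reduce everything to the single scalar condition $\tau\equiv 0$, using the RM structure equations and the identity $\kappa_1\kappa_2'-\kappa_1'\kappa_2=\tau\kappa^2$ already established, together with the fact that admissibility (no inflection points) forces $\kappa=\kappa_1\neq 0$. The pivotal observation is that, since $\kappa_1=\kappa\neq 0$, the normal development $(\kappa_1,\kappa_2)$ never meets the origin; hence it lies on a line through the origin if and only if its tangent is everywhere radial, that is, if and only if the Wronskian $\kappa_1\kappa_2'-\kappa_1'\kappa_2$ vanishes identically. By the cited identity this is exactly $\tau\equiv 0$. Thus the proposition is equivalent to: an admissible curve is planar iff $\tau\equiv 0$.

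For the direct implication I would encode any containing plane as a Euclidean affine plane $\langle\mathbf{x}-\alpha_0,\mathbf{N}\rangle=0$, using the ambient product $\langle\cdot,\cdot\rangle$ compatible with $\vecpe$ (not the degenerate $\langle\cdot,\cdot\rangle_z$), with $\mathbf{N}\neq 0$ fixed. Differentiating $\langle\alpha-\alpha_0,\mathbf{N}\rangle\equiv 0$ up to third order and using $\mathbf{t}'=\kappa_1\mathbf{n}_1+\kappa_2\mathbf{n}_2$, $\mathbf{n}_1'=-\kappa_1\mathbf{t}$, $\mathbf{n}_2'=0$ yields first $\langle\mathbf{t},\mathbf{N}\rangle=0$ and then the homogeneous system $\kappa_1 p+\kappa_2 q=0$, $\kappa_1' p+\kappa_2' q=0$, where $p=\langle\mathbf{n}_1,\mathbf{N}\rangle$, $q=\langle\mathbf{n}_2,\mathbf{N}\rangle$. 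Since $\{\mathbf{t},\mathbf{n}_1,\mathbf{n}_2\}$ is a basis of $\mathbb{R}^3$, the nonzero vector $\mathbf{N}$ cannot be orthogonal to all three, so $(p,q)\neq(0,0)$; the coefficient determinant $\kappa_1\kappa_2'-\kappa_1'\kappa_2$ must therefore vanish, giving $\tau\equiv 0$ and, by the pivotal observation, the line-through-origin condition.

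For the converse I would build the plane explicitly rather than quoting Sachs. If the development lies on a line through the origin, then the Wronskian vanishes, so $\tau\equiv 0$. Consequently the Frenet bivector $\mathcal{B}=\mathbf{t}\vecpe\mathbf{n}$ satisfies $\mathcal{B}'=-\tau\mathcal{N}=0$ and is a constant Euclidean vector; moreover $\frac{\mathrm{d}}{\mathrm{d}s}\langle\alpha-\alpha_0,\mathcal{B}\rangle=\langle\mathbf{t},\mathbf{t}\vecpe\mathbf{n}\rangle=0$, so $\langle\alpha-\alpha_0,\mathcal{B}\rangle\equiv 0$ and $\alpha$ lies in the plane through $\alpha_0$ with Euclidean normal $\mathcal{B}$. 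Admissibility guarantees this plane is non-isotropic, consistent with the earlier remark that an admissible plane curve must lie on a non-isotropic plane.

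The main obstacle, and the point that needs care, is the mismatch between the degenerate metric and the notion of a plane: a plane is an affine subset of $\mathbb{R}^3$ and is cleanly described only through the Euclidean product underlying $\vecpe$, since isotropic directions are invisible to $\langle\cdot,\cdot\rangle_z$. The remaining delicacy is bookkeeping in the two places where admissibility enters: the conclusion $(p,q)\neq(0,0)$ relies on $\{\mathbf{t},\mathbf{n}_1,\mathbf{n}_2\}$ spanning $\mathbb{R}^3$, while the equivalence ``Wronskian zero $\Leftrightarrow$ line through the origin'' relies on $\kappa_1=\kappa\neq 0$ keeping the development off the origin.
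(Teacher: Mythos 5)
Your proof is correct, and it takes a genuinely different (and in places cleaner) route than the paper's. The paper stays inside its contact-order machinery: it computes the normal of the osculating plane from an order-2 contact condition, obtaining $\mathbf{u}=\rho\,[\kappa_1\mathcal{N}_2-\kappa_2\mathcal{N}_1]$ with $\rho=(\kappa_1\Vert\mathcal{B}\Vert)^{-1}$, and then characterizes planarity by a rather involved computation of $\mathrm{d}\mathbf{u}/\mathrm{d}s$ (with terms in $\Vert\mathcal{B}\Vert$ and $\langle\mathcal{B},\mathcal{H}\rangle$), which vanishes iff $\tau=0$; the passage to the line-through-origin condition is then the same constant-ratio observation you make, via $\kappa_1\kappa_2'-\kappa_1'\kappa_2=\tau\kappa^2$ and $\kappa_1=\kappa\neq0$. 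You instead differentiate the incidence relation $\langle\alpha-\alpha_0,\mathbf{N}\rangle\equiv0$ of a fixed plane three times and conclude from the nontrivial solvability of the homogeneous system $\kappa_1p+\kappa_2q=0$, $\kappa_1'p+\kappa_2'q=0$ that the Wronskian vanishes pointwise — correctly using that $\{\mathbf{t},\mathbf{n}_1,\mathbf{n}_2\}$ is a basis of $\mathbb{R}^3$ (the paper's $\det(\mathbf{t},\mathbf{n}_1,\mathbf{n}_2)=1$) so that $(p,q)\neq(0,0)$ — and for the converse you build the plane explicitly from the Frenet bivector, since $\tau\equiv0$ gives $\mathcal{B}'=-\tau\mathcal{N}=0$ and $\langle\alpha-\alpha_0,\mathcal{B}\rangle'=0$; it is worth adding explicitly that $\mathcal{B}\neq0$, which follows from $\det(\mathcal{T},\mathcal{N},\mathcal{B})=1$. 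What your version buys: it avoids normalizing the plane normal and the paper's messiest derivative computation, and it makes the converse fully explicit, whereas in the paper it is absorbed into the assertion that a curve is planar iff all its osculating planes coincide together with $\mathbf{u}'=0\Leftrightarrow\tau=0$. What the paper's version buys: uniformity, since the identical osculating-object formalism is used for the spherical characterization, so the plane case appears as the degenerate member of the same family. Your bookkeeping of where admissibility enters — $\kappa_1=\kappa\neq0$ making ``line through the origin'' equivalent to vanishing Wronskian, and the basis property forcing $(p,q)\neq(0,0)$ — is exactly right, and these are the same two points on which the paper's argument implicitly rests.
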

\begin{proof}
A curve lies on a plane $\Pi$ if and only if all its osculating planes are equal to $\Pi$. Define $F(\mathbf{x})=\langle\mathbf{x}-\alpha_0,\mathbf{u}\rangle$, where $\langle\mathbf{u},\mathbf{u}\rangle=1$ (for convenience, we describe a plane in $\mathbb{I}^3$ through a unit vector with respect to $\mathbb{E}^3$). Taking the derivatives of $F\circ\alpha$ twice and demanding a contact of order 2, we have
\begin{equation}
\left\{
\begin{array}{ccc}
 \langle\mathbf{t}(s_0),\mathbf{u}\rangle & = & 0\\
 \langle[\kappa_1\mathbf{n}_1+\kappa_2\mathbf{n}_2]\vert_{s_0},\mathbf{u}\rangle & = & 0\\
\end{array}
\right..
\end{equation}
From these equations we deduce that
\begin{equation}
\mathbf{u}=\mathbf{u}(s_0) = \rho(s_0)\,[\mathbf{t}\times_e(\kappa_1\mathbf{n}_1+\kappa_2\mathbf{n}_2)]\vert_{s_0}=\rho(s_0)\,[\kappa_1\mathcal{N}_2-\kappa_2\mathcal{N}_1]\vert_{s_0}\,,
\end{equation}
where, by applying the definition of the Frenet and RM bivectors, we can write $\rho=(\kappa_1\Vert\mathcal{B}\Vert)^{-1}$. 

The condition of being a plane curve is equivalent to $\mathrm{d}\mathbf{u}/\mathrm{d}s=0$. Thus
\begin{eqnarray}
\frac{\mathrm{d}\mathbf{u}}{\mathrm{d}s} & = & -\frac{1}{\kappa_1\Vert\mathcal{B}\Vert}\left[\left(\frac{\kappa_1\kappa_2'-\kappa_1'\kappa_2}{\kappa_1}+\frac{\kappa_2\tau\langle\mathcal{B},\mathcal{H}\rangle}{\langle\mathcal{B},\mathcal{B}\rangle}\right)\mathcal{N}_1+\frac{\kappa_1\tau\langle\mathcal{B},\mathcal{H}\rangle}{\langle\mathcal{B},\mathcal{B}\rangle}\mathcal{N}_2\right]\nonumber\\
& = &-\frac{\tau}{\Vert\mathcal{B}\Vert}\left[\left(1+\frac{\theta\langle\mathcal{B},\mathcal{H}\rangle}{\langle\mathcal{B},\mathcal{B}\rangle}\right)\mathcal{N}_1+\frac{\langle\mathcal{B},\mathcal{H}\rangle}{\langle\mathcal{B},\mathcal{B}\rangle}\mathcal{N}_2\right],
\end{eqnarray}
where we used that $\tau\kappa^2=\kappa_1\kappa_2'-\kappa_1'\kappa_2$, $\mathcal{N}_1=\mathcal{H}$, $\langle\mathcal{H},\mathcal{H}\rangle=1$, and $\langle\mathcal{T},\mathcal{T}\rangle=1$.

Finally, it is easy to see that the planarity condition, i.e., $\mathbf{u}'=0$, is equivalent to $\tau=0\Leftrightarrow (\kappa_2/\kappa_1)'=\kappa_1\kappa_2'-\kappa_1'\kappa_2=0$, which  is equivalent to $\kappa_2/\kappa_1=\mbox{const.}$ and then implies $(\kappa_1,\kappa_2)$ lies on a line through the origin.
\end{proof}

\section{Differential geometry in pseudo-isotropic space}

Following the Cayley-Klein paradigm, we must specify an absolute figure in order to build the pseudo-isotropic space. Here, the pseudo-isotropic absolute is composed by a plane at infinity, identified with $x_0=0$, and a degenerate quadric of index one, identified with $x_0^2+x_1^2-x_2^2=0$ (there are other choices for the pseudo-isotropic absolute figure and we discuss it in the next subsection). Equivalently, we may say that in homogeneous coordinates the pseudo-isotropic absolute figure is composed by a plane $\omega:x_0=0$ and a pair of lines $f_1:0=x_0=x_1+x_2$ and $f_2:0=x_0=x_1-x_2$. Observe, in addition, that the point $F=[0:0:0:1]\in\mathbb{P}^3$ lies in the intersection $f_1\cap f_2$ and, therefore, should be preserved. Hence, the pseudo-isotropic absolute figure is alternatively given by $\{\omega,f_1,f_2,F\}$.

Let us denote a projectivity in $\mathbb{P}^3$ by
\begin{equation}
\left\{
\begin{array}{ccccccccc}
\bar{x}_0 & = & a_{00}x_0 &+& a_{01}x_1 &+& a_{02}x_2 &+& a_{03}x_{3}\\
\bar{x}_1 & = & a_{10}x_0 &+& a_{11}x_1 &+& a_{12}x_2 &+& a_{13}x_{3}\\
\bar{x}_2 & = & a_{20}x_0 &+& a_{21}x_1 &+& a_{22}x_2 &+& a_{23}x_{3}\\
\bar{x}_3 & = & a_{30}x_0 &+& a_{31}x_1 &+& a_{32}x_2 &+& a_{33}x_{3}\\
\end{array}
\right.,\, \det(a_{ij})\not=0.
\end{equation}

Imposing that $\omega$ and $F$ should be preserved leads to $a_{01}=a_{02}=a_{03}=0$ and $a_{13}=a_{23}=0$, respectively. A projectivity that preserves the absolute figure is said to be a \emph{direct projectivity} if it takes $f_i$ to $f_i$, i.e., $x_1\pm x_2=0$ goes in $\bar{x}_1\pm \bar{x}_2=0$, and an \emph{indirect projectivity} if it takes $f_i$ to $f_j$ ($i\not=j$), i.e., $x_1\pm x_2=0$ goes in $\bar{x}_1\mp \bar{x}_2=0$. The coefficients $a_{ij}$ of a direct projectivity should satisfy the relations
\begin{equation}
\left\{
\begin{array}{c}
a_{11}-a_{12}+a_{21}-a_{22} = 0\\
a_{11}+a_{12}-a_{21}-a_{22} = 0\\
\end{array}
\right..
\end{equation}
Adding and subtracting the equations above leads to $a_{11}=a_{22}$ and $a_{12}=a_{21}$.

Finally, going to affine coordinates and denoting $a:=a_{10}/a_{00}$, $b:=a_{20}/a_{00}$, $c:=a_{30}/a_{00}$, $p\cosh\phi:=a_{11}/a_{00}$, $p\sinh\phi:=a_{12}/a_{00}$, and $c_i:=a_{3i}/a_{00}$ ($i=1,2,3$), defines the group $G_8^{\mathrm{p}}$ of pseudo-isotropic direct similarities
\begin{equation}
\left\{
\begin{array}{ccc}
\bar{x} & = & a + p(x \cosh\phi + y \sinh\phi) \\
\bar{y} & = & b + p(x \sinh\phi + y \cosh\phi) \\
z & = & c + c_1 x + c_2 y + c_{3} z\\
\end{array}
\right..\label{eq::SemiIsoDirSim}
\end{equation}

Let us introduce a metric in $\mathbb{I}_{\mathrm{p}}^3=\mathbb{P}^3/\omega$ according to
\begin{equation}
\langle \mathbf{u},\mathbf{v}\rangle_{z,{\mathrm{p}}} = u_1v_1 - u_2v_2. \label{eq::DefSemiIsoMetr}
\end{equation}
If we apply a transformation from $G_8^{\mathrm{p}}$ to $A,B\in\mathbb{I}_{\mathrm{p}}^3$, the norm $\Vert\mathbf{v}\Vert_{\mathrm{p}}=\sqrt{\vert\langle\mathbf{v},\mathbf{v}\rangle_{z,{\mathrm{p}}}\vert}$ induced by the metric above satisfies
\begin{equation}
\Vert\bar{B}-\bar{A}\,\Vert_{\mathrm{p}} = p\, \Vert B-A\,\Vert_{\mathrm{p}}.
\end{equation}

For $p=1$, the pseudo-isotropic metric $\langle\cdot,\cdot\rangle_{z,{\mathrm{p}}}$ is an absolute invariant. Note in addition that, as happens in the simply isotropic space, the distance between two points with the same top view\footnote{As in $\mathbb{I}^3$, we may define the \emph{top view} as the projection on the $xy$-plane, \emph{(pseudo-)isotropic direction} as $(0,0,z)$, which is preserved by $\mathcal{B}_6^{\mathrm{p}}$, \emph{(pseudo-)isotropic lines} as those lines with isotropic direction, and \emph{(pseudo-)isotropic planes} as those planes containing an isotropic line.} vanishes. In such cases, one may introduce a \emph{pseudo-isotropic codistance} as
\begin{equation}
\mbox{cd}_{z,{\mathrm{p}}}(A,B) = \vert b_3-a_3\vert.
\end{equation}
Applying a transformation from $G_8^{\mathrm{p}}$ to $A,B\in\mathbb{I}_{\mathrm{p}}^3$ leads to 
\begin{equation}
\mbox{cd}_{z,{\mathrm{p}}}(\bar{A},\bar{B})=p\,\mbox{cd}_{z,{\mathrm{p}}}(A,B).
\end{equation}
\begin{definition}
The group of $\mbox{d}_{z,{\mathrm{p}}}$-length and $\mbox{cd}_{z,{\mathrm{p}}}$-colength preserving direct projectivities forms the group of pseudo-isotropic (rigid) motions $\mathcal{B}_6^{\mathrm{p}}$. The pseudo-isotropic geometry is the study of $(\mathbb{I}_{\mathrm{p}}^3,\mathcal{B}_6^{\mathrm{p}})$.
\end{definition}
\begin{remark}
In $\mathbb{I}_{\mathrm{p}}^3$ one has $A=B\Leftrightarrow\mbox{d}_{z,{\mathrm{p}}}(A,B)=0$ and $\mbox{cd}_{z,{\mathrm{p}}}(A,B)=0$.
\end{remark}

In short, pseudo-isotropic geometry is the study of those properties in $\mathbb{R}^3$ invariant by the action of the 6-parameter group $\mathcal{B}_6^{\mathrm{p}}$
\begin{equation}
\left\{
\begin{array}{ccc}
\bar{x} & = & a + x\,\cosh\phi+y\,\sinh\phi\\ 
\bar{y} & = & b + x\,\sinh\phi+y\,\cosh\phi\\
\bar{z} & = & c + c_1x+c_2y+z\\
\end{array}
\right.,\,a,b,c,c_1,c_2,\phi\in\mathbb{R}.\label{eq::SemiIsoGroupSB6}
\end{equation}

Notice that on the top view plane, the pseudo-isotropic geometry behaves like the geometry in  $\mathbb{E}^2_1$. Indeed, up to translations, the action of $\mathcal{B}_6^{\mathrm{p}}$ on the top view corresponds to the action of $O_1^{++}(2)$ \cite{LopesIEJG2014}, the group of (hyperbolic) rotations in $\mathbb{E}_1^2$ that preserves both the orientation of $\mathbb{R}^2$ as a vector space, i.e., $O_1^{++}(2)\subset SO_1(2)$, and the time-orientation of $\mathbb{E}_1^2$, i.e., $O_1^{++}(2)\subset O_1^+(2)$.  

\begin{remark}
The group of isometries of $\mathbb{E}_1^2$ has  four components: $O_1^{++}(2)$, $O_1^{+-}(2)$, $O_1^{-+}(2)$, and $O_1^{--}(2)$ (a $+$ sign in the 1st upper position means that the vector space orientation is preserved, while in the 2nd upper position it means that time-orientation is preserved; a minus sign means that orientation is not preserved) \cite{LopesIEJG2014}. Choosing ``$-\cosh\phi$'' for the $x$-coefficient in Eq. (\ref{eq::SemiIsoDirSim}) leads to the action of $O_1^{+-}(2)$ on the top view. On the other hand, the study of indirect projectivities gives
\begin{equation}
\left\{
\begin{array}{ccc}
\bar{x} & = & a \pm x \cosh\phi - y \sinh\phi \\
\bar{y} & = & b + x \sinh\phi \mp y \cosh\phi \\
z & = & c + c_1 x + c_2 y + z\\
\end{array}
\right..
\end{equation}
These projectivities correspond to the action of $O_1^{-\pm}(2)$ on the top view. However, $O_1^{-+}(2)\cup O_1^{--}(2)$ does not form a group, since it does not contain the identity.
\end{remark}

\subsection{Alternative descriptions of pseudo-isotropic geometry}
\label{subsec::DescSemiIsoGeom}

In some pioneering works \cite{Sachs1990,StrubeckerCrelle1938}, the absolute figure of the pseudo-isotropic space is given in homogeneous coordinates by  $\omega:x_0=0$ together with the pair of real lines $x_0=x_1=0$ and $x_0=x_2=0$, which leads to the 6 parameter group
\begin{equation}
\left\{
\begin{array}{ccccccccc}
\bar{x} & = & a & + & p\,x & &           & &  \\ 
\bar{y} & = & b & + &      & & p^{-1}\,y & &  \\
\bar{z} & = & c & + & c_1x &+& c_2\,y      &+& z\\
\end{array}
\right.,\,a,b,c,c_1,c_2,p\in\mathbb{R}:\label{eq::StrubeckerSemiisoRigMtn}
\end{equation}
see \cite{StrubeckerCrelle1938}, Eqs. (1), (2), (4), and (10), pp. 136-137; or \cite{Sachs1990}, Eqs. (1.68), (1.70), p. 24.

This choice furnishes a geometry equivalent to that described by $\mathcal{B}_6^{\mathrm{p}}$, Eq. (\ref{eq::SemiIsoGroupSB6}). (Notice that here the isotropic metric changes to $\mathrm{d}s^2=\mathrm{d}x\,\mathrm{d}y$.) Indeed, this can be made clear with the help of Lorentz numbers $\mathbb{L}$, also known as double or hyperbolic numbers (see Appendix). Rotations in $\mathbb{E}^2$ may be described through multiplication by a unit spacelike Lorentz number $a$, i.e., $q\mapsto a\,q$ \cite{BirmanMonthly1984}. A unit Lorentz number is written as $a=p\,\mathrm{e}_++p^{-1}\,\mathrm{e}_-$ in the basis $\{\mathrm{e}_{\pm}\}$ and as $a=\cosh\phi+\ell\sinh\phi$ in the basis $\{1,\ell\}$ (see Fig. 1 in the Appendix). It follows that a rotation may be equivalently written as  
\begin{equation}
\left[
\begin{array}{c}
x_2\\
y_2\\
\end{array}
\right]=
\left[
\begin{array}{cc}
\cosh\phi & \sinh\phi\\
\sinh\phi & \cosh\phi\\
\end{array}
\right]\left[
\begin{array}{c}
x_1\\
y_1\\
\end{array}
\right]\mbox{ or }\left[
\begin{array}{c}
x_2\\
y_2\\
\end{array}
\right]=
\left[
\begin{array}{cc}
p & 0\\
0 & p^{-1}\\
\end{array}
\right]\left[
\begin{array}{c}
x_1\\
y_1\\
\end{array}
\right],
\end{equation}
where we used the linear representations in Eqs. (\ref{eq::LinearRepLorentzNumbersNonNullBasis}) and (\ref{eq::LinearRepLorentzNumbersNullBasis}), respectively.

In short, our expression for $\mathcal{B}_6^{\mathrm{p}}$ in Eq. (\ref{eq::SemiIsoGroupSB6}) and that of Strubecker \cite{StrubeckerCrelle1938} in Eq. (\ref{eq::StrubeckerSemiisoRigMtn}) are equivalent, the choice between them being just a matter of convenience. %Finally, let us mention that the group $\mathcal{B}_6^{\mathrm{p}}$ was already considered by Aydin \cite{AydinArXiv2016} along with some investigations of pseudo-isotropic curves and surfaces (in his approach however, it is neither discussed how $\mathbb{I}_{\mathrm{p}}^3$ can be seen as a CK geometry nor how it relates to Strubecker's absolute figure).  

\section{pseudo-isotropic spheres}

A pseudo-isotropic sphere is a connected and irreducible surface of degree 2 that contains the absolute figure. As we will see below, the pseudo-isotropic spheres are given by the 4-parameter family
\begin{equation}
(x^2-y^2)+2c_1x+2c_2y+2c_3z+c_4=0,\,c_1,c_2,c_3,c_4\in\mathbb{R}.\label{eq::SemiIsoSpheres}
\end{equation}
In addition, up to a rigid motion (in $\mathbb{I}_{\mathrm{p}}^3$), we can express a sphere in one of the two normal forms below:
\begin{equation}
\mbox{(sphere of parabolic type) }z = \frac{1}{2p}(x^2-y^2)\,\mbox{ with }\,p\not=0;\label{eq::SemiIsoParaSph}
\end{equation}
and
\begin{equation}
\mbox{(sphere of cylindrical type) }x^2-y^2=\pm\, r^2\,\mbox{ with }\,r>0.\label{eq::SemiIsoCylSph}
\end{equation}
\begin{remark}
In $\mathbb{E}^3$ these equations define a hyperbolic paraboloid and a hyperbolic cylinder, respectively, which justifies the names for the normal forms.
\end{remark}

A degree 2 surface in $\mathbb{P}^3$ may be written as
$
\mathcal{Q}:\sum_{i,j=0}^3c_{ij}x_ix_j=0\,,
$ 
where $c_{ij}\in\mathbb{R}$. If $\mathcal{Q}$ contains the absolute, i.e.,  $x_0=x_1\pm x_2=0\Rightarrow \mathcal{Q}(x_0,\dots,x_3)=0$, then
\begin{equation}
\left\{
\begin{array}{c}
(c_{11}+2c_{12}+c_{22})x_2^2+c_{33}x_3^2+2(c_{23}+c_{13})x_2x_3 =0 \\
(c_{11}-2c_{12}+c_{22})x_2^2+c_{33}x_3^2+2(c_{23}-c_{13})x_2x_3 =0 \\
\end{array}
\right..
\end{equation}
Since the above equation must be satisfied for all $[x_2:x_3]\in\mathbb{P}^1$, we conclude that
\begin{equation}
\left\{
\begin{array}{c}
c_{11}\pm2c_{12}+c_{22}=0\\
c_{23}\pm c_{13} = 0\\
c_{33} =0
\end{array}
\right.\Leftrightarrow \left\{
\begin{array}{c}
c_{12}=c_{13}=c_{23}=0\\
c_{11}+c_{22} = 0\\
\end{array}
\right..
\end{equation}
Finally, going to affine coordinates gives
\begin{equation}
c_{11}(x^2-y^2)+2c_{01}x+2c_{02}y+2c_{03}z+c_{00}=0,
\end{equation}
where we must have $c_{11}\not=0$. If $c_{03}\not=0$, we may write the equation above as
\begin{equation}
z=R(x^2-y^2)+ax+by+cz+d.
\end{equation}
The sphere above can be written in the parabolic normal form, Eq. (\ref{eq::SemiIsoParaSph}) after a convenient pseudo-isotropic rigid motion. On the other hand, if $c_{03}=0$, then, after a convenient semi-isotropic rigid motion, we have a cylindrical sphere, Eq. (\ref{eq::SemiIsoCylSph}). 

\section{Moving frames along curves in pseudo-isotropic space}

A curve $\alpha:I\to\mathbb{I}_{\mathrm{p}}^3$ is said to be \emph{regular} if $\alpha'\not=0$. As in $\mathbb{I}^3$, $\alpha'(t_0)$ is an \emph{inflection point} if $\{\alpha'(t_0),\alpha''(t_0)\}$ is linearly dependent, i.e., $\exists\,\sigma\in\mathbb{R}$ such that $\alpha''(t_0)=\sigma\alpha'(t_0)$. Notice that being regular is a (pseudo-isotropic) geometric concept, i.e.,  $\forall\,T\in \mathcal{B}_6^{\mathrm{p}}$, $\alpha'(t)\not=0\Rightarrow (T\circ\alpha)'(t)\not=0$. The same for an inflection point.

To find the osculating plane outside an inflection point $\alpha(t_0)$, we may employ the inner product in Lorentz-Minkowski space $\mathbb{E}_1^3$ given by $\langle \mathbf{u},\mathbf{v}\rangle_1=u_1v_1-u_2v_2+u_3v_3$. Defining $F(\mathbf{x})=\langle\mathbf{x},\mathbf{u}\rangle_1$, where  $\Vert\mathbf{u}\Vert_1=\sqrt{\vert\langle\mathbf{u},\mathbf{u}\rangle_1\vert}=1$, and imposing the order 2 contact ($F'=F''=0$ at $\alpha(t_0)$) leads to $
\mathbf{u} = \rho\,(\alpha'\times_1\alpha''),\,\rho\not=0$, where $\times_1$ is the vector product in $\mathbb{E}_1^3$: $\mathbf{v}\times_1\mathbf{w}=\det[(\mathbf{i},v_1,w_1),(-\mathbf{j},v_2,w_2),(\mathbf{k},v_3,w_3)]$. Thus, as expected, the position vector $\mathbf{x}$ of the osculating plane at $\alpha(t_0)$ verifies
\begin{equation}
\langle\,\mathbf{x}-\alpha(t_0),\alpha'(t_0)\times_1\alpha''(t_0)\,\rangle_1 = 0.\label{eq::SemiIsoOscPlane}
\end{equation}

\begin{definition}
A regular curve $\alpha:I\to\mathbb{I}_{\mathrm{p}}^3$ free of inflection points is an \emph{admissible curve} if all the osculating planes are not pseudo-isotropic. Equivalently, $(x'y''-x''y')\vert_{t}\not=0$ for all $t\in I$, where $\alpha(t)=(x(t),y(t),z(t))$ (notice, $\mbox{span}\{\alpha',\alpha''\}$ is isotropic if and only the third coordinate of $\alpha'\times_1\,\alpha''$ vanishes).
\end{definition}

The concept of reparametrization and arc-length parameter are defined as usual. Note however that curves in $\mathbb{I}_{\mathrm{p}}^3$ may have distinct causal characters: a vector $\mathbf{v}$ is said to be \emph{spacelike} if $\langle\mathbf{v},\mathbf{v}\rangle_{z,{\mathrm{p}}}>0$ or $\mathbf{v}=0$, \emph{timelike} if $\langle\mathbf{v},\mathbf{v}\rangle_{z,{\mathrm{p}}}<0$, and \emph{lightlike} if  $\langle\mathbf{v},\mathbf{v}\rangle_{z,{\mathrm{p}}}=0$ and $\mathbf{v}\not=0$. The causal character of $\alpha$ is given by that of $\alpha'$.

A lightlike curve $\alpha$ gives rise to a top view curve $\tilde{\alpha}$ in $\mathbb{E}_1^2$ whose image must lie on a straight line: $x=\pm y$ is the light cone in $\mathbb{E}_1^2$. These curves are not admissible: the light cone in $\mathbb{I}_{\mathrm{p}}^3$ is the set of pseudo-isotropic planes $(\mu,\pm\mu,\nu)$, $\mu,\nu\in\mathbb{R}$. So, in our study we shall restrict ourselves to space- and time-like curves (in principle, a curve may change its causal character. We shall not consider this here, but the interested reader may consult \cite{SaloomGD2012}: please, observe that their notation for the metric and curvature in $\mathbb{E}_1^2$ is slightly distinct from ours). Finally, since in $\mathbb{E}_1^2$ a vector $(x,y)$ is spacelike (timelike) if and only if $(y,x)\perp(x,y)$ is timelike (spacelike), we do not have non-lightlike curves with a lightlike acceleration vector. 

%From a general result of linear algebra in $\mathbb{R}^n$ equipped with a index one metric, a vector $\mathbf{v}$ is spacelike (timelike) if and only if $\mathbf{u}\in\mathbf{v}^{\perp}$ is timelike (spacelike) \cite{LopesIEJG2014}, where the orthogonal complement is defined as usual.

\subsection{Pseudo-isotropic Frenet frame}

Let $\alpha:I\to\mathbb{I}_{\mathrm{p}}^3$ be a unit speed admissible curve. Let us introduce $\epsilon=\langle\mathbf{t},\mathbf{t}\rangle_{z,{\mathrm{p}}}\in\{-1,+1\}$. If $\mathbf{t}'\not=0$, we define the pseudo-isotropic principal normal vector and curvature function, respectively, as
\begin{equation}
\mathbf{n}(s)=\frac{\mathbf{t}'(s)}{\eta\,\kappa(s)},\mbox{ and }\,\kappa=\eta\,\Vert\alpha''(s)\Vert_{z,{\mathrm{p}}}=-\epsilon\Vert\tilde{\alpha}''(s)\Vert_1,
\end{equation}
where $\eta=\langle\mathbf{n},\mathbf{n}\rangle_{z,{\mathrm{p}}}=-\epsilon$ (note that $\mathbf{t}'$ is not lightlike). Note that the curvature function is just the curvature of the top view curve $\tilde{\alpha}$ in $\mathbb{E}_1^2$. For the binormal we define $\mathbf{b}=(0,0,1)\Rightarrow \mathbf{b}'=0$. Clearly we have $\mathbf{t}'=\eta\kappa\,\mathbf{n}=-\epsilon\kappa\mathbf{n}$. For the derivative of the principal normal, let us write
$\mathbf{n}' = a\,\mathbf{t}+b\,\mathbf{n}+c\,\mathbf{b}$. Since $\langle\mathbf{n},\mathbf{n}\rangle_{z,{\mathrm{p}}}=\eta=\pm1$, we necessarily have $b=0$. On the other hand, for the first coefficient
$a=\epsilon\langle\mathbf{n}',\mathbf{t}\rangle_{z,{\mathrm{p}}}=-\epsilon\langle\mathbf{n},\mathbf{t}'\rangle_{z,{\mathrm{p}}}=-\epsilon\kappa$. Finally, from the third coefficient we define the pseudo-isotropic torsion $c=-\epsilon\eta\tau=\tau$, in analogy with the definition of torsion in $\mathbb{E}_1^3$ \cite{daSilvaJG2017,Kuhnel2010}. In short, we have the following pseudo-isotropic Frenet equations
\begin{equation}
\frac{\mathrm{d}}{\mathrm{d}s}\left(
\begin{array}{c}
\mathbf{t}\\
\mathbf{n}\\
\mathbf{b}\\
\end{array}
\right)=\left(
\begin{array}{ccc}
0 \, & -\epsilon\kappa \, & 0 \, \\
-\epsilon\kappa \, & 0 \, & \tau \, \\
0\, & 0 \,& 0\,\\
\end{array}
\right)\left(
\begin{array}{c}
\mathbf{t}\\
\mathbf{n}\\
\mathbf{b}\\
\end{array}
\right).
\end{equation}

An admissible curve $\alpha:I\to\mathbb{I}_{\mathrm{p}}^3$ is a plane curve if and only if $\tau=0$. Indeed, from a pseudo-Euclidean viewpoint, the osculating plane has a normal vector given by $
\mathbf{u}=\alpha'\times_1\alpha''/\Vert\alpha'\times_1\alpha''\Vert_1$. The condition of being a plane curve is equivalent to $\mathbf{u}'\equiv0$, i.e., the osculating planes are always the same. Taking the derivative of $\mathbf{u}$ and using that $\alpha'\times\alpha''=\eta\kappa\,\mathbf{t}\times_1\mathbf{n}$ in combination with Frenet equations gives
\begin{equation}
\frac{\mathrm{d}\mathbf{u}}{\mathrm{d}s}=\tau\,\mathbf{t}\times_1\displaystyle\left(\frac{\mathbf{b}}{\Vert\mathbf{t}\times_1\mathbf{n}\Vert_1}-\frac{\langle\mathbf{t}\times_1\mathbf{n},\mathbf{t}\times_1\mathbf{b}\rangle}{\Vert\mathbf{t}\times_1\mathbf{n}\Vert_1^3}\,\mathbf{n}\right)=0\Leftrightarrow\tau=0.
\end{equation}

\subsection{Pseudo-isotropic spherical image and moving bivectors}

Let $\alpha:I\to\mathbb{I}_{\mathrm{p}}^3$ be an admissible curve and $\Sigma_{{\mathrm{pi}}}$ be the unit radius, $p=1$, sphere $
z = \frac{1}{2}(x^2-y^2)$.

\begin{definition}
 For each $s$, let $\alpha^*(s)$ be the point on $\Sigma_{si}$ such that the tangent plane $\Pi_s$ to $\Sigma_{{\mathrm{pi}}}$ at $\alpha^*(s)$ is parallel to the osculating plane $\pi_s$ of $\alpha$ at $\alpha(s)$. The curve $\alpha^*$ is the \emph{spherical image} of $\alpha$ (in $\mathbb{E}^3$, there are three types of spherical images, or indicatrices: $\mathbf{t}:I\to\mathbb{S}^2$, $\mathbf{n}:I\to\mathbb{S}^2$, and $\mathbf{b}:I\to\mathbb{S}^2$. In $\mathbb{I}_{\mathrm{p}}^3$, however, one can define non-trivial indicatrices only for the tangent and normal and they are curves on the unit sphere of cylindrical type).
\end{definition}

The equation of the tangent plane to $\Sigma_{{\mathrm{pi}}}$ at $\alpha^*$ is $
z = x^*x-y^*y-z^*$. On the other hand, from Eq. (\ref{eq::SemiIsoOscPlane}), the equation for the osculating plane is
\begin{equation}
z = \frac{\epsilon}{\kappa}(y'z''-y''z')x-\frac{\epsilon}{\kappa}(x'z''-x''z')y+w,
\end{equation}
where we used that $\kappa=-\epsilon\,(x'y''-x''y')$: the value of $w$ is not important here.

The condition $\Pi_s\parallel\pi_s$ leads to
\begin{equation}
x^* = \frac{\epsilon}{\kappa}\left\vert
\begin{array}{cc}
y'  & z' \\
y'' & z''\\
\end{array}
\right\vert,\,\,y^* = \frac{\epsilon}{\kappa}\left\vert
\begin{array}{cc}
x'  & z' \\
x'' & z''\\
\end{array}
\right\vert.
\end{equation}
Finally, in order to find $z^*$, one may use that $x'\,^2-y'\,^2=\epsilon$ ($\Rightarrow x'x''-y'y''=0$) and $\kappa^2=\eta(x''^2-y''^2)$, $\eta=-\epsilon$. Then, 
\begin{equation}
z^* = \frac{1}{2}({x^*}^2-{y^*}^2)= \frac{\epsilon}{2\kappa^2}(\kappa^2z'^2-z''^2).
\end{equation}

The spherical image will be used to describe the pseudo-isotropic moving bivectors, defined by using the vector product in $\mathbb{E}_1^3$: $
\mathcal{T}=\mathbf{n}\times_1\mathbf{b}$, $\mathcal{N} = \mathbf{b}\times_1\mathbf{t}$, and $\mathcal{B} = \mathbf{t}\times_1\mathbf{n}$.

\begin{proposition}
The Frenet bivectors satisfy
\begin{equation}
\mathcal{B}=\mathbf{b}-\widetilde{\alpha^*},\,\mathcal{N}=\eta\,\tilde{\mathbf{n}},\,\mathcal{T}=\epsilon\,\tilde{\mathbf{t}}.
\end{equation}
\end{proposition}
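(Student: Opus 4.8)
The plan is to prove all three identities by a single direct computation in coordinates, exploiting the special form $\mathbf{b}=(0,0,1)$ of the binormal and the explicit description of the spherical image derived above. First I would record the frame vectors componentwise: writing $\alpha=(x,y,z)$, we have $\mathbf{t}=\alpha'=(x',y',z')$ and, from the pseudo-isotropic Frenet equation $\mathbf{t}'=\eta\kappa\,\mathbf{n}$, also $\mathbf{n}=\frac{1}{\eta\kappa}\alpha''=\frac{1}{\eta\kappa}(x'',y'',z'')$. Two top-view identities in $\mathbb{E}_1^2$ do the bulk of the work: the unit-speed condition $x'^2-y'^2=\epsilon$, whose derivative $x'x''-y'y''=0$ forces $(x'',y'')$ to be Lorentz-orthogonal to $(x',y')$ and hence proportional to the swapped vector $(y',x')$, together with the curvature formula $\kappa=-\epsilon(x'y''-x''y')$. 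Combining these pins down the constant of proportionality and yields the planar Frenet relations $x''=-\kappa\,y'$ and $y''=-\kappa\,x'$, which are the engine for the $\mathcal{T}$ and $\mathcal{N}$ identities.

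For the two ``easy'' identities I would then just apply the $\times_1$ formula; since $\mathbf{b}$ has only a $z$-component, both products collapse to purely top-view vectors. Concretely, $\mathcal{N}=\mathbf{b}\times_1\mathbf{t}$ reduces immediately to $(-y',-x',0)$, and identifying this with $\eta\,\tilde{\mathbf{n}}$ uses $\tilde{\mathbf{n}}=\frac{1}{\eta\kappa}(x'',y'',0)=\epsilon(y',x',0)$, itself a consequence of the planar relations and of $-1/\eta=\epsilon$. Similarly $\mathcal{T}=\mathbf{n}\times_1\mathbf{b}=\frac{1}{\eta\kappa}(y'',x'',0)$, into which I substitute $x''=-\kappa y'$, $y''=-\kappa x'$ and simplify with $-1/\eta=\epsilon$ to obtain $\epsilon\,\tilde{\mathbf{t}}$. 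These two identities are essentially bookkeeping once the signs are fixed.

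The identity $\mathcal{B}=\mathbf{b}-\widetilde{\alpha^*}$ is the substantive one and where I expect the main effort. Writing $\mathcal{B}=\mathbf{t}\times_1\mathbf{n}=\frac{1}{\eta\kappa}\,\alpha'\times_1\alpha''$, the $\times_1$ formula produces the three components $\frac{1}{\eta\kappa}(y'z''-y''z')$, $\frac{1}{\eta\kappa}(x'z''-x''z')$, and $\frac{1}{\eta\kappa}(x'y''-x''y')$. The first two are precisely $-x^*$ and $-y^*$ after inserting the explicit formulas $x^*=\frac{\epsilon}{\kappa}(y'z''-y''z')$, $y^*=\frac{\epsilon}{\kappa}(x'z''-x''z')$ for the spherical image and using $\epsilon/\eta=-1$, while the third component reduces to $1$ via $x'y''-x''y'=-\epsilon\kappa$. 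Hence $\mathcal{B}=(-x^*,-y^*,1)=\mathbf{b}-\widetilde{\alpha^*}$. The one genuinely delicate point throughout is tracking the nonstandard middle-component sign of $\times_1$ (the ``$-\mathbf{j}$'' entry in its definition) together with the orientation choice fixing the sign of $\tilde{\mathbf{n}}$; keeping these consistent is exactly what makes the constants $\epsilon$, $\eta$, and the shift by $\mathbf{b}$ emerge correctly rather than with spurious signs.
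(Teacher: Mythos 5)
Your proof is correct and takes essentially the same route as the paper: a direct componentwise evaluation of the $\times_1$ products, using the top-view identities $x'^2-y'^2=\epsilon$, $x'x''-y'y''=0$, and $\kappa=-\epsilon(x'y''-x''y')$, together with the explicit formulas for $x^*,y^*$, to identify $\mathcal{B}$ with $\mathbf{b}-\widetilde{\alpha^*}$ and to reduce $\mathcal{T},\mathcal{N}$ to multiples of $\tilde{\mathbf{t}},\tilde{\mathbf{n}}$. Your planar relations $x''=-\kappa y'$ and $y''=-\kappa x'$ are merely an equivalent restatement of the paper's ratios $x''/(x'y''-x''y')=\epsilon y'$ and $y''/(x'y''-x''y')=\epsilon x'$, so the arguments coincide up to bookkeeping.
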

\begin{proof}
We have $\mathcal{B}=\mathbf{t}\times_1\mathbf{n}=-\epsilon\alpha'\times_1\alpha''/\kappa$ and so
\begin{eqnarray}
\mathcal{B} & = & -\frac{\epsilon}{\kappa}(y'z''-y''z',x'z''-x''z',x'y''-x''y')\nonumber\\
& = &(-\epsilon\frac{y'z''-y''z'}{\kappa},-\epsilon\frac{x'z''-x''z'}{\kappa},1)=\mathbf{b}-\widetilde{\alpha^*}.
\end{eqnarray}
On the other hand, since $x'x''-y'y''=0$, we have $x''/(x'y''-x''y')=\epsilon y'$ and $y''/(x'y''-x''y')=\epsilon x'$. So, $\mathcal{T}=-\epsilon\alpha''\times_1\mathbf{k}/\kappa=\epsilon\tilde{\mathbf{t}}$. Similarly, $\mathcal{N}=-\epsilon\, \tilde{\mathbf{n}}$.
\end{proof}

\subsection{Rotation minimizing frames in pseudo-isotropic space}

As in $\mathbb{I}^3$, the binormal $\mathbf{b}$ is RM: $\mathbf{b}'=0$. Thus, we need to introduce an RM vector field in substitution to the principal normal $\mathbf{n}$. As in $\mathbb{I}^3$, we have the 
\begin{theorem}
Let $\mathbf{n}_1$ be a unit normal vector field along $\alpha:I\to\mathbb{I}_{\mathrm{p}}^3$. If $\mathbf{n}_1$ is RM, then
\begin{equation}
 \mathbf{n}_1(s)=\mathbf{n}(s)-\left(\int_{s_0}^s \tau(x)\mathrm{d}x+\tau_0\right)\,\mathbf{b}(s),
\end{equation}
where $\tau_0$ is a constant. In addition, an RM frame $\{\mathbf{t},\mathbf{n}_1,\mathbf{n}_2=\mathbf{b}\}$ in pseudo-isotropic space $\mathbb{I}_{\mathrm{p}}^3$ satisfies\footnote{It may be instructive to compare this equation of motion with Eq. (16) from \cite{daSilvaJG2017}.}
\begin{equation}
\frac{\mathrm{d}}{\mathrm{d}s}\left(
\begin{array}{c}
\mathbf{t}\\
\mathbf{n}_1\\
\mathbf{n}_2\\
\end{array}
\right)=\left(
\begin{array}{ccc}
0 & \eta\kappa_1 & \kappa_2\\
-\epsilon\kappa_1 & 0 & 0\\
0 & 0 & 0\\
\end{array}
\right)\left(
\begin{array}{c}
\mathbf{t}\\
\mathbf{n}_1\\
\mathbf{n}_2\\
\end{array}
\right),
\end{equation}
where the natural curvatures are $\kappa_1=\kappa=\kappa\,\cosg\,\theta$ and $\kappa_2=\eta\,\kappa\,\theta=-\epsilon\,\kappa\,\sing\,\theta$, with $\sing\,[\theta(s)]=\theta(s)=\int_{s_0}^s \tau(x)\mathrm{d}x+\tau_0$.
\end{theorem}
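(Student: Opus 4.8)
The plan is to mimic the construction already carried out in $\mathbb{I}^3$, being careful to carry the causal signs $\epsilon$ and $\eta=-\epsilon$ throughout. Since $\mathbf{b}'=0$, the binormal is automatically an RM vector field, so I set $\mathbf{n}_2=\mathbf{b}$ and look for an RM replacement for the principal normal among the vector fields orthogonal (with respect to $\langle\cdot,\cdot\rangle_{z,{\mathrm{p}}}$) to $\mathbf{t}$. Any such field may be written as $\mathbf{v}=\mu\,\mathbf{n}+\nu\,\mathbf{b}$ for functions $\mu,\nu$, since the pseudo-isotropic normal plane of $\mathbf{t}$ is spanned by $\mathbf{n}$ and $\mathbf{b}$.

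The key observation is that, because $\mathbf{b}=(0,0,1)$ is lightlike for the degenerate metric, one has $\langle\mathbf{b},\mathbf{b}\rangle_{z,{\mathrm{p}}}=0$ and $\langle\mathbf{n},\mathbf{b}\rangle_{z,{\mathrm{p}}}=0$. Hence the normalization $\langle\mathbf{v},\mathbf{v}\rangle_{z,{\mathrm{p}}}=\eta$ collapses to $\mu^2\eta=\eta$, i.e. $\mu=\pm1$, and leaves $\nu$ entirely free. I take $\mu=+1$, the orientation-preserving choice that makes $\mathbf{v}$ agree with $\mathbf{n}$ up to a binormal correction; the alternative $\mu=-1$ merely reverses $\mathbf{v}$ and yields $\det(\mathbf{t},\mathbf{v},\mathbf{b})=-\det(\mathbf{t},\mathbf{n},\mathbf{b})$.

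Next I would impose the RM condition $\mathbf{v}'\parallel\mathbf{t}$. Differentiating $\mathbf{v}=\mathbf{n}+\nu\,\mathbf{b}$ with the pseudo-isotropic Frenet equations $\mathbf{n}'=-\epsilon\kappa\,\mathbf{t}+\tau\,\mathbf{b}$ and $\mathbf{b}'=0$ gives $\mathbf{v}'=-\epsilon\kappa\,\mathbf{t}+(\tau+\nu')\,\mathbf{b}$; requiring the $\mathbf{b}$-component to vanish forces $\nu'=-\tau$, so $\nu=-\left(\int_{s_0}^s\tau(x)\,\mathrm{d}x+\tau_0\right)=-\theta$ for a constant $\tau_0$. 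This produces the stated formula $\mathbf{n}_1=\mathbf{n}-\theta\,\mathbf{b}$ and exhibits the $\tau_0$-ambiguity, which is the hyperbolic analogue of the $SOI(2)$ freedom seen in $\mathbb{I}^3$, here governed by the Lorentz numbers $\mathbb{L}$.

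Finally, to read off the equations of motion, I substitute $\mathbf{n}=\mathbf{n}_1+\theta\,\mathbf{b}=\mathbf{n}_1+\theta\,\mathbf{n}_2$ into $\mathbf{t}'=-\epsilon\kappa\,\mathbf{n}$, obtaining $\mathbf{t}'=-\epsilon\kappa\,\mathbf{n}_1-\epsilon\kappa\theta\,\mathbf{n}_2=\eta\kappa_1\,\mathbf{n}_1+\kappa_2\,\mathbf{n}_2$ with $\kappa_1=\kappa$ and $\kappa_2=\eta\kappa\theta=-\epsilon\kappa\,\sing\,\theta$; then $\mathbf{n}_1'=\mathbf{n}'-\theta'\,\mathbf{b}=-\epsilon\kappa\,\mathbf{t}=-\epsilon\kappa_1\,\mathbf{t}$ using $\theta'=\tau$, while $\mathbf{n}_2'=\mathbf{b}'=0$. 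Linear independence of $\{\mathbf{t},\mathbf{n}_1,\mathbf{n}_2\}$ is immediate, since the change of basis from the Frenet frame is unitriangular and hence $\det(\mathbf{t},\mathbf{n}_1,\mathbf{n}_2)=\det(\mathbf{t},\mathbf{n},\mathbf{b})\neq0$ for admissible curves. I do not expect a genuine obstacle here; the only points demanding care are the bookkeeping of the signs $\epsilon$ and $\eta=-\epsilon$ and the recognition that the lightlike character of $\mathbf{b}$ is precisely what decouples the normalization (which fixes $\mu$) from the determination of $\nu$ (which is fixed by the RM condition).
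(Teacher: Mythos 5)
Your proof is correct and is essentially the paper's own argument: the paper only spells out the proof in $\mathbb{I}^3$ (decompose $\mathbf{v}=\mu\,\mathbf{n}+\nu\,\mathbf{b}$ in the normal plane, let the unit-normalization force $\mu=\pm1$ while leaving $\nu$ free, let the RM condition $\mathbf{v}'\parallel\mathbf{t}$ force $\nu'=-\mu\tau$, and fix $\mu=1$ by orientation) and then states the pseudo-isotropic theorem ``as in $\mathbb{I}^3$,'' i.e., with precisely the sign bookkeeping $\eta=-\epsilon$ that you carry through, so your verification of the frame equations matches the intended proof. One small correction to a side remark: the $\tau_0$-ambiguity acts in the normal plane, which contains the isotropic direction $\mathbf{b}$ and hence carries a degenerate induced metric, so even in $\mathbb{I}_{\mathrm{p}}^3$ it is a Galilean rotation governed by the dual numbers $\mathbb{D}$ (whence the $\cosg$, $\sing$ in the statement), not by the Lorentz numbers $\mathbb{L}$, which the paper uses for the top-view geometry $\mathbb{E}_1^2$ and the equivalence of absolute figures.
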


We can also introduce  moving bivector frame associated with an RM frame
\begin{equation}
\left\{
\begin{array}{ccc}
\mathcal{T} &=& \mathbf{n}_1\times_1\mathbf{b}\\
\mathcal{N}_1 &=& \mathbf{b}\times_1\mathbf{t}\\
\mathcal{N}_2 &=& \mathbf{t}\times_1\mathbf{n}_1\\
\end{array}
\right.\Rightarrow
\frac{\mathrm{d}}{\mathrm{d}s}\left(
\begin{array}{c}
\mathcal{T}\\
\mathcal{N}_1\\
\mathcal{N}_2\\
\end{array}
\right)=\left(
\begin{array}{ccc}
0 & \epsilon\kappa_1 & 0\\
-\eta\kappa_1 & 0 & 0\\
-\kappa_2 & 0 & 0\\
\end{array}
\right)\left(
\begin{array}{c}
\mathcal{T}\\
\mathcal{N}_1\\
\mathcal{N}_2\\
\end{array}
\right).
\end{equation}

\section{pseudo-isotropic spherical curves}

All the results to be described below are analogous to their simply isotropic versions and, therefore, we will not go through the details. 

We may write a pseudo-isotropic osculating sphere at $\alpha_0=\alpha(s_0)$ as
\begin{equation}
F_1(\mathbf{x})=\lambda\langle\mathbf{x}-\alpha_0,\mathbf{x}-\alpha_0\rangle_{z,s}+\langle\mathbf{u},\mathbf{x}-\alpha_0\rangle_1=0,
\end{equation}
where the constants $\lambda\in\mathbb{R},\mathbf{u}\in\mathbb{E}_1^3$ will be determined by the contact of order 3 condition. Taking the derivatives $(F\circ\alpha)^{(k)}(s)$, $k=1,2,3$, at $s=s_0$ gives
\begin{equation}
\langle\mathbf{u},\mathbf{t}\rangle_1=0,\,
2\lambda\epsilon+\langle\mathbf{u},\eta\kappa_1\mathbf{n}_1+\kappa_2\mathbf{n}_2\rangle_1=0,\,\mbox{ and }\,
\langle\mathbf{u},\eta\kappa_1'\mathbf{n}_1+\kappa_2'\mathbf{n}_2\rangle_1=0.
\end{equation}
By using the RM moving bivectors, we deduce that for some constant $\rho\not=0$
\begin{equation}
\mathbf{u}=\rho[\eta\kappa_1'\mathcal{N}_2-\kappa_2'\mathcal{N}_1]\vert_{s_0},\,\lambda\,\epsilon = \rho\,\tau\kappa^2.
\end{equation}

In short, the equation for a pseudo-isotropic osculating sphere can be written as
\begin{equation}
\tilde{\mathbf{x}}^2-2\langle\mathbf{x},\tilde{\alpha}_0+\frac{\kappa_2'\mathcal{N}_1-\eta\kappa_1'\mathcal{N}_2}{\epsilon\tau\kappa^2}\vert_{s_0}\rangle_1+2\left[\frac{\tilde{\alpha}_0^2}{2}-\langle\alpha_0,\frac{\eta\kappa_1'\mathcal{N}_2-\kappa_2'\mathcal{N}_1}{\epsilon\tau\kappa^2}\vert_{s_0}\rangle_1\right]=0.
\end{equation}

The condition of being spherical implies that the pseudo-isotropic osculating spheres are all the same. This condition demands
\begin{equation}
\frac{\mathrm{d}}{\mathrm{d}s}\left[\tilde{\alpha}+\frac{\kappa_2'\mathcal{N}_1}{\epsilon\tau\kappa^2}-\frac{\eta\kappa_1'\mathcal{N}_2}{\epsilon\tau\kappa^2}\right]=0,\,
\frac{\mathrm{d}}{\mathrm{d}s}\left[\frac{\tilde{\alpha}^2}{2}-\left\langle\alpha,\frac{\eta\kappa_1'\mathcal{N}_2-\kappa_2'\mathcal{N}_1}{\epsilon\tau\kappa^2}\right\rangle\right]=0.\label{eq::CondToBeSemiSphericalScalar}
\end{equation}

The first equation above leads to
$
a_1:=\frac{\kappa_2'}{\tau\kappa^2},$ and $\,a_2:=-\frac{\kappa_1'}{\tau\kappa^2}
$ constant, while the second gives $
(\epsilon+a_1\kappa_1+a_2\kappa_2)\langle\alpha,\mathbf{t}\rangle_{z,s}=0\,.
$ Then, we have the

\begin{theorem}
An admissible $C^4$ regular curve $\alpha:I\to \mathbb{I}_{\mathrm{p}}^3$ lies on the surface of a sphere if and only if its normal development $(\kappa_1(s),\kappa_2(s))$ lies on a line not passing through the origin. In addition, $\alpha$ is a plane curve if and only if the normal development lies on a line passing through the origin. 
\end{theorem}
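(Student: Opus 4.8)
The plan is to recycle the simply isotropic argument of Section~5 almost verbatim, the only genuinely new ingredient being the systematic propagation of the causal signs $\epsilon=\langle\mathbf{t},\mathbf{t}\rangle_{z,\mathrm{p}}$ and $\eta=-\epsilon$. The organizing principle is unchanged: an admissible curve lies on a pseudo-isotropic sphere exactly when all of its osculating spheres coincide, so that both the top-view center and the constant term of the osculating-sphere equation are independent of $s$. These two requirements are precisely the two equations displayed in Eq.~(\ref{eq::CondToBeSemiSphericalScalar}), and the theorem amounts to reading off their content.

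First I would settle the spherical part. Inserting the RM-bivector equations of motion into the first (vector) equation of Eq.~(\ref{eq::CondToBeSemiSphericalScalar}) and invoking the linear independence of $\{\mathcal{N}_1,\mathcal{N}_2\}$ reduces it to the requirement that
\begin{equation}
a_1:=\frac{\kappa_2'}{\tau\kappa^2}\quad\text{and}\quad a_2:=-\frac{\kappa_1'}{\tau\kappa^2}
\end{equation}
be constant. The decisive algebraic fact is the identity $\kappa_1\kappa_2'-\kappa_1'\kappa_2=\eta\,\tau\kappa^2=-\epsilon\,\tau\kappa^2$, immediate from $\kappa_1=\kappa$, $\kappa_2=\eta\,\kappa\,\theta$, $\theta'=\tau$; it gives $a_1\kappa_1+a_2\kappa_2\equiv-\epsilon$. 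Hence, once $a_1,a_2$ are constant, $(\kappa_1,\kappa_2)$ obeys the genuine affine relation $a_1\kappa_1+a_2\kappa_2+\epsilon=0$, a line not passing through the origin since $\epsilon=\pm1\ne0$ (and $(a_1,a_2)\ne(0,0)$, else $\epsilon=0$). Conversely, if $(\kappa_1,\kappa_2)$ lies on a line $c_1\kappa_1+c_2\kappa_2=c_3$ with $c_3\ne0$, differentiating yields $c_1\kappa_1'+c_2\kappa_2'=0$, so $(\kappa_1',\kappa_2')=\mu\,(-c_2,c_1)$ for some scalar $\mu$; substituting this into the definitions of $a_1,a_2$ and using $\kappa_1\kappa_2'-\kappa_1'\kappa_2=\mu\,c_3$ shows the factor $\mu$ cancels against $\tau\kappa^2$, leaving $a_1=-\epsilon c_1/c_3$ and $a_2=-\epsilon c_2/c_3$ constant. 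This establishes ``spherical $\Leftrightarrow$ line not through the origin''. The second (scalar) equation of Eq.~(\ref{eq::CondToBeSemiSphericalScalar}) then costs nothing: the same computation that produced it reduces it to $(\epsilon+a_1\kappa_1+a_2\kappa_2)\,\langle\alpha,\mathbf{t}\rangle_{z,\mathrm{p}}=0$, whose first factor vanishes identically by the identity $a_1\kappa_1+a_2\kappa_2\equiv-\epsilon$, so the constant term is automatically $s$-independent, the osculating sphere is genuinely fixed, and $\alpha$ is forced onto it.

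For the planar part I would lean on the Frenet characterization already proved in Section~8, that an admissible $\alpha:I\to\mathbb{I}_{\mathrm{p}}^3$ is planar if and only if $\tau\equiv0$. Since $\kappa\ne0$, the same identity $\kappa_1\kappa_2'-\kappa_1'\kappa_2=-\epsilon\,\tau\kappa^2$ shows $\tau=0$ is equivalent to $\kappa_1\kappa_2'-\kappa_1'\kappa_2=0$, i.e.\ to $(\kappa_2/\kappa_1)'=0$, hence to $\kappa_2=m\,\kappa_1$ for a constant $m$; this is exactly the statement that $(\kappa_1,\kappa_2)$ lies on a line through the origin. Reversing the equivalences closes this half and, together with the spherical dichotomy, accounts for the complementary roles of the two families of lines.

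I expect the sole real difficulty to be clerical rather than conceptual. One must carry $\epsilon$ and $\eta$ correctly through the osculating-sphere derivatives, through the bivector relations $\mathcal{N}=\eta\,\tilde{\mathbf{n}}$, $\mathcal{T}=\epsilon\,\tilde{\mathbf{t}}$ of the Proposition in Section~8, and through the normalizations $\langle\mathbf{n}_1,\mathbf{n}_1\rangle_{z,\mathrm{p}}=\eta$, $\langle\mathbf{n}_2,\mathbf{n}_2\rangle_{z,\mathrm{p}}=0$, since a single misplaced sign would shift the line onto or off the origin and collapse the dichotomy. A secondary point deserving separate attention is the cylindrical sphere, for which $\langle\alpha,\alpha\rangle_{z,\mathrm{p}}$ is constant and $\langle\alpha,\mathbf{t}\rangle_{z,\mathrm{p}}=0$ renders the scalar equation vacuous; there I would argue directly as in $\mathbb{I}^3$, writing $\langle\alpha-P,\alpha-P\rangle_{z,\mathrm{p}}=\pm r^2$, differentiating to obtain $\alpha-P=a_1\mathbf{n}_1+a_2\mathbf{n}_2$ with $a_1$ constant, and then deducing $\epsilon+a_1\kappa_1=0$, so that $\kappa=\kappa_1$ is constant and the normal development degenerates to the vertical line $\kappa_1=\mathrm{const}\ne0$ --- again a line not through the origin, consistent with the unified statement.
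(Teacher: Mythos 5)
Your proposal is correct and takes essentially the same route as the paper, which reduces the theorem to the two displayed conditions of Eq.~(\ref{eq::CondToBeSemiSphericalScalar}) and explicitly defers the details to the simply isotropic argument of Section~5. Your sign bookkeeping (the identity $\kappa_1\kappa_2'-\kappa_1'\kappa_2=\eta\,\tau\kappa^2=-\epsilon\,\tau\kappa^2$, hence $a_1\kappa_1+a_2\kappa_2\equiv-\epsilon$), the planar case via $\tau\equiv 0$, and the separate cylindrical-type analysis all match the paper's intended proof, with your explicit converse ($\mu$-cancellation) merely spelling out a step the paper leaves implicit.
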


\appendix

\section{Generalized complex numbers}

\begin{figure*}[tbp]
\centering
    {\includegraphics[width=0.4\linewidth]{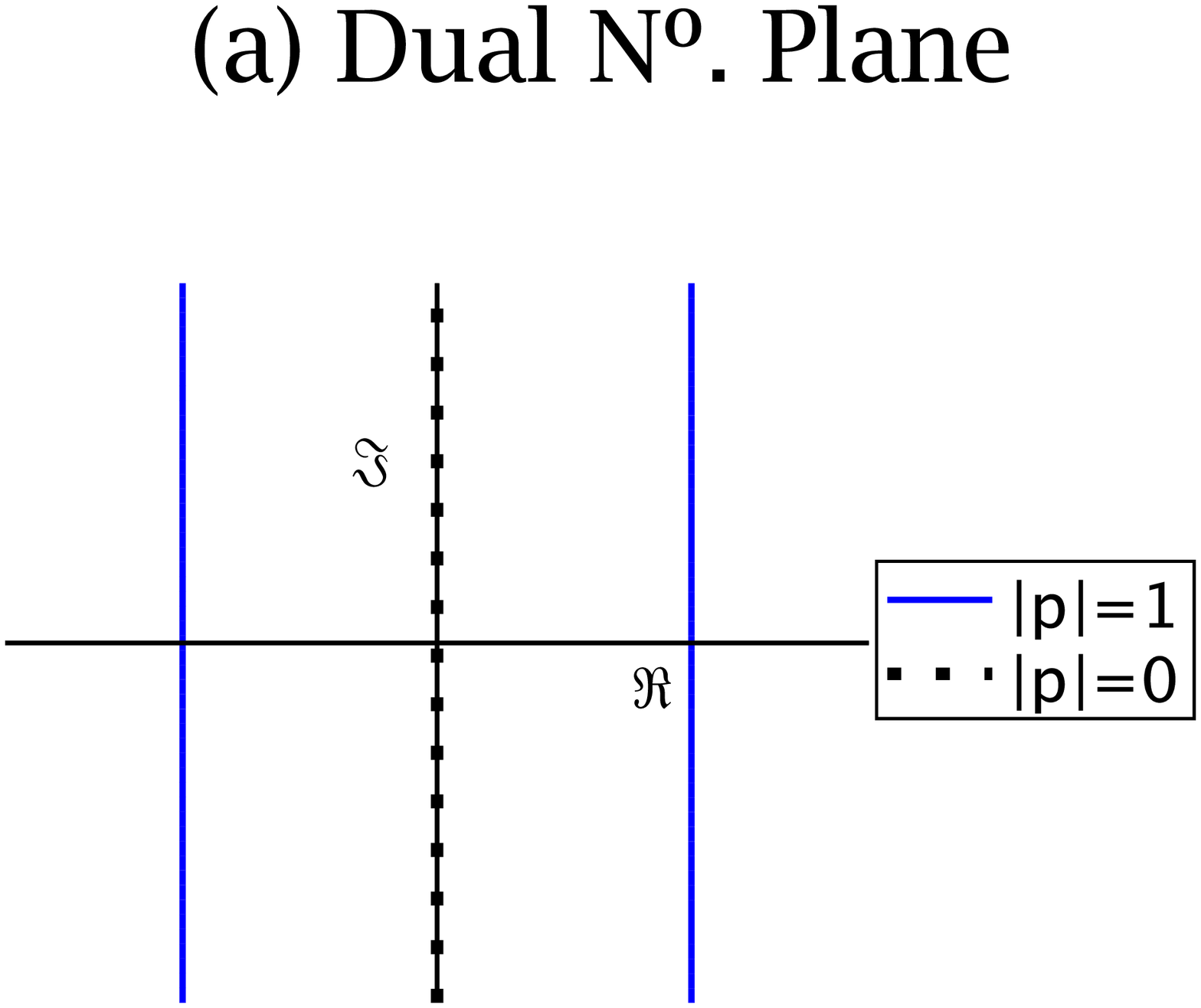}}
    {\includegraphics[width=0.4\linewidth]{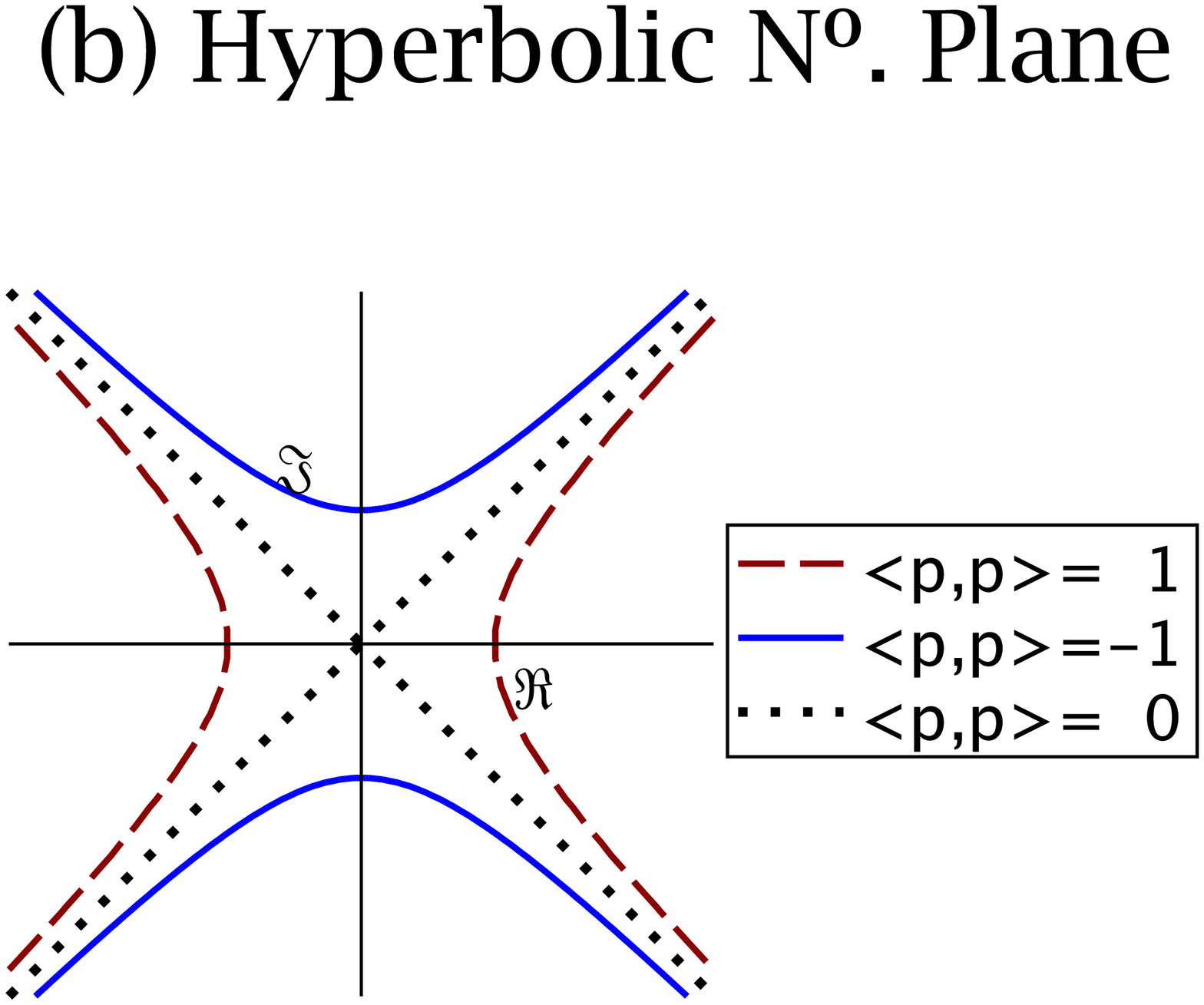}}
   
          \caption{Unit length and zero divisors hyperbolic and dual numbers: (a) Dual numbers zero divisors (dotted black line), and unit length dual numbers (solid blue line). A unit length dual number may be written as $(\pm1,\phi)\mapsto \pm1+\phi\,\varepsilon$; (b) Hyperbolic numbers zero divisors (dotted black line), and unit length hyperbolic numbers (solid blue and dashed red lines). There exist four types of unit length hyperbolic numbers, the dashed red lines correspond to $(\pm\cosh\phi,\sinh\phi)\mapsto\pm\cosh\phi+\ell\sinh\phi$ and the solid blue lines correspond to $(\sinh\phi,\pm\cosh\phi)\mapsto\sinh\phi\pm\ell\cosh\phi$. In particular, the dashed red line for $\Re>0$ corresponds to $O^{++}(2)$, the group of time orientation preserving hyperbolic rotations (see Subsect. \ref{subsec::DescSemiIsoGeom}).}
          \label{fig::DiagramSphPlaneCurv}
\end{figure*}

In addition to the well known (field of) complex numbers $\mathbb{C}$, we may also extend the reals to numbers in a plane by specifying other values for the square of the imaginary unity: dual numbers for a vanishing square and Lorentz numbers for a positive square, as described below.

\subsection{The ring of dual numbers}

We write a \emph{dual number} $p\in\mathbb{D}$ as $p=p_1+p_2\,\varepsilon$, where the dual imaginary $\varepsilon$ satisfies $\varepsilon^2=0$ \cite{Sachs1987,Yaglom1979}. Algebraically, the ring $\mathbb{D}$ is isomorphic to $\mathbb{R}[X]/(X^2)$. The real and imaginary parts are $p_1=\Re(p)$ and $p_2=\Im(p)$, respectively. The arithmetic operations in $\mathbb{D}$ are defined as 
\begin{equation}
\left\{
\begin{array}{rcl}
(p_1+p_2\,\varepsilon)+(q_1+q_2\,\varepsilon) & = & (p_1+q_1)+(p_2+q_2)\,\varepsilon\\ 
(p_1+p_2\,\varepsilon)(q_1+q_2\,\varepsilon) & = & (p_1q_1)+(p_1q_2+q_1p_2)\,\varepsilon
\end{array}
\right..
\end{equation}
In addition, we may introduce a (semi-)norm in  $\mathbb{I}^2$ by $\vert p_1+p_2\varepsilon\vert=\vert p_1\vert$, which is induced by $\mathbf{u}\,\cdot_{\,\mathbb{I}^2}\,\mathbf{v}=u_1v_1$.  Unit duals can be written as $p=1+\phi\,\varepsilon=\cosg\phi+\varepsilon\,\sing\phi$, where $\cosg\phi$ and $\sing\phi$ are the Galilean trigonometric functions \cite{Yaglom1979}.

Finally, dual numbers $\mathbb{D}$ admit a linear representation in the 2 by 2 matrices 
\begin{equation}
p_1+p_2\,\varepsilon\mapsto\left(
\begin{array}{cc}
p_1 & 0\\
p_2 & p_1\\
\end{array}
\right).\label{eq::LinearRepDualNumbers}
\end{equation}

\subsection{The ring of Lorentz/hyperbolic numbers}

We write a Lorentz number as $p=p_1+p_2\ell$, where the hyperbolic imaginary $\ell$ satisfies $\ell^2=1$ \cite{BirmanMonthly1984,Yaglom1979}. Algebraically, the ring $\mathbb{L}$ is isomorphic to $\mathbb{R}[X]/(X^2-1)$. The real and imaginary parts are $p_1=\Re(p)$ and $p_2=\Im(p)$, respectively. The arithmetic operations in $\mathbb{L}$ are 
\begin{equation}
\left\{
\begin{array}{rcl}
(p_1+p_2\ell)+(q_1+q_2\ell) &=& (p_1+q_1)+(p_2+q_2)\ell\\
(p_1+p_2\ell)(q_1+q_2\ell) &=& (p_1q_1+p_2q_2)+(p_1q_2+q_1p_2)\ell
\end{array}
\right..
\end{equation} 

An inner product in $\mathbb{E}_1^2$ may be introduced as $\langle p,q\rangle_1=\Re(p\,\bar{q})$, where $\bar{p}=\overline{p_1+p_2\ell}=p_1-p_2\ell$ denotes hyperbolic conjugation (see Fig. \ref{fig::DiagramSphPlaneCurv}). Unit Lorentz numbers can be written as $p=(\cosh\theta+\ell\sinh\theta)$ if $p$ is spacelike, i.e., $ p\,\bar{p}>0$, or as $p=(\sinh\theta+\ell\cosh\theta)$ if $p$ is timelike, i.e., $ p\,\bar{p}<0$. Notice that Lorentz numbers admit a linear representation in the 2 by 2 matrices as
\begin{equation}
p_1+p_2\,\ell\mapsto\left(
\begin{array}{cc}
p_1 & p_2\\
p_2 & p_1\\
\end{array}
\right).\label{eq::LinearRepLorentzNumbersNonNullBasis}
\end{equation}

On the other hand, there is a description of $\mathbb{L}$ distinct from the (canonical) basis $\{1,\ell\}$. Indeed, we can describe a hyperbolic number $p$ in terms of the light-cone basis $\mathrm{e}_{\pm}=(1\pm\ell)/2$. The number $\mathrm{e}_{\pm}$ is lightlike, i.e., $\mathrm{e}_{\pm}\,\overline{\mathrm{e}_{\pm}}=0$, and writing $p=p_+\,\mathrm{e}_+ + p_-\,\mathrm{e}_-$, we have 
\begin{equation}
\left\{
\begin{array}{ccl}
p\,q &=& (p_{+}q_{+})\,\mathrm{e}_+\,+\,(p_-q_-)\,\mathrm{e}_-\\
\bar{p} &=& p_{-}\mathrm{e}_{+}+p_{+}\mathrm{e}_{-}\\
\mathrm{e}_{\pm}^2 &=& \mathrm{e}_{\pm}\,,\,\,\mathrm{e}_{\pm}\mathrm{e}_{\mp}=0
\end{array}
\right..
\end{equation}
In the light-cone basis, the linear representation in the 2 by 2 matrices reads
\begin{equation}
p_+\,\mathrm{e}_+ + p_-\,\mathrm{e}_-\mapsto\left(
\begin{array}{cc}
p_+ & 0\\
0 & p_-\\
\end{array}
\right).\label{eq::LinearRepLorentzNumbersNullBasis}
\end{equation}

%blah blah blah

%\begin{acknowledgements}
%The author would like to thank S. A. V. Silva and G. G. Carvalho for useful discussions and also the financial support by Conselho Nacional de Desenvolvimento Cient\'ifico e Tecnol\'ogico - CNPq (Brazilian agency).
%\end{acknowledgements}

%    Bibliographies can be prepared with BibTeX using amsplain,
%    amsalpha, or (for "historical" overviews) natbib style.

\end{document}